
\documentclass[12pt]{article}
\usepackage{latexsym, amsmath, amsfonts, amsthm, amssymb}
\usepackage{times}
\usepackage{a4wide}
\usepackage{graphicx, tocloft}
\usepackage{mathrsfs}
\usepackage{hyperref}

\def \Vol {{ \rm Vol }}

\def \RR {\mathbb R}

\def \EE {\mathbb E}

\def \ZZ {\mathbb Z}

\def \PP {\mathbb P}

\def \eps {\varepsilon}
\def \vphi {\varphi}

\def \cE {\mathcal E}
\def \cF {\mathcal F}

\def \cS {\mathcal S}

\DeclareMathOperator{\Tr}{Tr}

\newtheorem{theorem}{Theorem}[section]

\newtheorem{lemma}[theorem]{Lemma}

\newtheorem{proposition}[theorem]{Proposition}
\newtheorem{corollary}[theorem]{Corollary}

\theoremstyle{definition}
\newtheorem{remark}[theorem]{Remark}

\def\myffrac#1#2 in #3{\raise 2.6pt\hbox{$#3 #1$}\mkern-1.5mu\raise 0.8pt\hbox{$
		#3/$}\mkern-1.1mu\lower 1.5pt\hbox{$#3 #2$}}

\def\qed{\hfill $\vcenter{\hrule height .3mm
		\hbox {\vrule width .3mm height 2.1mm \kern 2mm \vrule width .3mm
			height 2.1mm} \hrule height .3mm}$ \bigskip}

\def \id {{\rm Id}}

\begin{document}

\title{Lattice packing of spheres in high dimensions \\ using a stochastically evolving ellipsoid}
\author{Boaz Klartag}
\date{}
\maketitle

\begin{abstract}
We prove that in any dimension $n$ there exists an origin-symmetric ellipsoid ${\mathcal{E}} \subset {\mathbb{R}}^n$ of volume $ c n^2 $ that contains
no points of ${\mathbb{Z}}^n$ other than the origin, where $c > 0$ is a universal constant. Equivalently, there exists a lattice sphere packing in ${\mathbb{R}}^n$ whose density is at least $cn^2 \cdot 2^{-n}$.
 Previously known constructions of sphere packings in ${\mathbb{R}}^n$ yielded densities of at most $C n \log n \cdot 2^{-n}$.
Our proof utilizes a stochastically evolving ellipsoid that accumulates at least  $c n^2$ lattice points on its boundary, while containing no lattice points
in its interior except for the origin.
\end{abstract}

    \section{Introduction}

    Let $n \geq 2$. A sphere packing in $\RR^n$ is a collection of disjoint  Euclidean balls of the same radius.
    A lattice in $\RR^n$  is the image of $\ZZ^n$ under an invertible, linear transformation $T: \RR^n \rightarrow \RR^n$.
    Thus, by a lattice in $\RR^n$ we always mean a lattice of full rank. The covolume of the lattice $L = T(\ZZ^n) \subset \RR^n$ is
     $$ \Vol_n(\RR^n / L) := |\det(T)|. $$
    A {\it lattice sphere packing} is a collection of disjoint  Euclidean balls, all of the same radius, whose centers
        form a lattice in $\RR^n$.
        The {\it density} of a lattice sphere packing is the proportion of space covered by the disjoint Euclidean balls
        of which it consists. Equivalently, if the lattice sphere packing consists of balls of radius $r$ whose centers form the lattice $L$,
        then its density  equals
        $$ \frac{\Vol_n(r B^n)}{\Vol_n(\RR^n / L)}, $$
        where $\Vol_n$ stands for $n$-dimensional volume in $\RR^n$, where $B^n \subset \RR^n$ is the open Euclidean ball of radius $1$ centered at the origin,
        and where $r A = \{ r x \, ; \, x \in A \}$ for $A \subset \RR^n$.   We write $\delta_n$ for the
        supremum of all densities of lattice sphere packings  in $\RR^n$. The  Minkowski-Hlawka theorem
        (see, e.g.,
        Gruber and Lekkerkerker \cite[Chapter 3]{GL}) implies that
        \begin{equation}  \delta_n \geq 2 \zeta(n) \cdot 2^{-n} = (2 + o(1)) \cdot 2^{-n}, \label{eq_1101} \end{equation}
        where $\zeta(n) = \sum_{k=1}^\infty k^{-n}$ and where $o(1)$ denotes a quantity that tends to zero as $n \to \infty$. The proof of the Minkowski-Hlawka theorem is probabilistic, and Problem X
        on Sarnak's list of open problems from the year 2000 asks for an explicit construction \cite{Sar}.
        The  bound (\ref{eq_1101}) was asymptotically improved in 1947 by Rogers \cite{rogers}, who showed that
        \begin{equation}
        \delta_n \geq c n \cdot 2^{-n} \label{eq_1659} \end{equation}
        for a universal constant $c > 0$. In his proof, Rogers used the Minkowski second theorem, as well as the concept of a random lattice and the Siegel summation
        formula, which we recall in Section \ref{sec5} below.

        \medskip The universal constant $c$ that Rogers' proof of (\ref{eq_1659}) yields satisfies $c \geq 2 / e$.
        This was subsequently improved by Davenport and Rogers \cite{DR}, who obtained (\ref{eq_1659}) with $c \approx 1.67$.  Ball \cite{B} used Bang's solution
        of Tarski's plank problem, and proved (\ref{eq_1659}) with $c = 2 - o(1)$. A plank is the region in space between two parallel hyperplanes,
        and the problem was to show that the sum of the widths of planks covering a convex body, is at least the body's minimal width.
        Vance \cite{vance} obtained $c \geq 6/e$ in dimensions divisible by $4$, by using
        random lattices with quaternionic symmetries. Her approach was further developed by Venkatesh \cite{venkatesh}, who used random lattices with sophisticated algebraic symmetries in order to show that
        \begin{equation}  \limsup_{n \rightarrow \infty} \frac{\delta_n}{n \cdot \log \log n \cdot 2^{-n}} \geq \frac{1}{2}. \label{eq_1202} \end{equation}
        Venkatesh \cite{venkatesh} also showed that $c \geq 2 \sinh^2(\pi e) / (\pi^2 e^3) - o(1) \approx 65963.8 - o(1)$.
        The constant~$1/2$ on the right-hand side of (\ref{eq_1202}) was improved to $1$ by Gargava and Viazovska~\cite{GV}.
        Campos, Jenssen, Michelen and Sahasrabudhe \cite{C} used graph-theoretic methods to prove the existence of a non-lattice sphere packing in $\RR^n$ of density $$ \left( \frac{1}{2} - o(1) \right) n \log n \cdot 2^{-n}. $$
        Graph theory was used earlier by
        Krivelevich, Litsyn and Vardy \cite{KLV} for the construction of a non-lattice sphere packing of density $c n \cdot 2^{-n}$ in $\RR^n$.
        Schmidt \cite{schmidt} proved (\ref{eq_1659}) by considering random lattices and by analyzing large hole events; these are rare events that occur with a probability of only $\exp(-\tilde{c} n)$.
        His analysis fits well with the theme that random lattices may sometimes be approximated by a Poisson process. The Poisson heuristic, which we recall below, was hinted at already in Rogers \cite{rogers3}.

        \medskip To summarize, up to logarithmic factors, several papers which are based on quite different ideas have  essentially arrived at  the same bound (\ref{eq_1659}) over the years.
        This bound has represented the state of the art on sphere packing in high dimensions -- again, up to logarithmic factors -- until now. We improve it as follows:

        \begin{theorem} For any $n \geq 2$,
        $$ \delta_n \geq c n^2 \cdot 2^{-n}, $$
        where $c > 0$ is a universal constant.
        \label{thm2}
        \end{theorem}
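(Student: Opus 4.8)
The plan is to reduce Theorem~\ref{thm2} to producing a single large origin-symmetric ellipsoid that avoids $\ZZ^n\setminus\{0\}$, and to build that ellipsoid as the state of a reflected diffusion on the cone of ellipsoids, designed to inflate its volume while never swallowing a nonzero lattice point. \textbf{Step 1 (reduction).} For symmetric positive-definite $A$ put $\cE_A=\{x\in\RR^n:\langle Ax,x\rangle<1\}=A^{-1/2}B^n$, so $Vol_n(\cE_A)=\omega_n/\sqrt{\det A}$ with $\omega_n=Vol_n(B^n)$. The open ellipsoid $\cE_A$ contains no point of $\ZZ^n\setminus\{0\}$ precisely when $\langle Av,v\rangle\ge1$ for all $v\in\ZZ^n\setminus\{0\}$, i.e.\ when the lattice $A^{1/2}\ZZ^n$ has minimal norm at least $1$; then balls of radius $1/2$ about the points of $A^{1/2}\ZZ^n$ form a lattice packing of density $Vol_n(\tfrac12 B^n)/|\det A^{1/2}|=2^{-n}Vol_n(\cE_A)$. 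So it suffices to exhibit, for every $n$, such an $A$ with $Vol_n(\cE_A)\ge cn^2$.

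\textbf{Step 2 (a reflected diffusion on ellipsoids).} Identify origin-symmetric ellipsoids with the open cone $\cP_n\subset\mathrm{Sym}_n$ of positive-definite matrices ($\dim\mathrm{Sym}_n=\binom{n+1}{2}$) via $A\leftrightarrow\cE_A$, and set
$$\cD=\bigl\{A\in\cP_n:\ \langle Av,v\rangle\ge1\ \text{ for all }\ v\in\ZZ^n\setminus\{0\}\bigr\},$$
a closed convex set, each constraint $\langle Av,v\rangle=\tr(Avv^{\top})\ge1$ being a half-space in $\mathrm{Sym}_n$. Inside $\cD$ I would run a diffusion $(A_t)_{t\ge0}$: a Brownian motion on $\cP_n$ (for a natural, e.g.\ $GL_n$-invariant, Riemannian structure) together with a drift pushing $\det$ downward, \emph{normally reflected} at $\partial\cD$ --- reflection at the facet $\{\langle Av,v\rangle=1\}$ along the inward normal $vv^{\top}$, with nondecreasing local time $\ell^v_t$. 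Then $A_t\in\cD$ for all $t$, so $\cE_{A_t}$ never contains a nonzero lattice point, while the pairs $\pm v$ with $\ell^v_t>0$ lie on $\partial\cE_{A_t}$: these are the lattice points accumulating on the boundary. One starts from an already somewhat spread-out ellipsoid --- for instance a Rogers-type one of volume $\gtrsim c_0 n$ given by \eqref{eq_1659} (so only a further factor $\sim n$ need be gained, and degenerate low-volume basins are avoided), or a slightly randomized small ellipsoid.

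\textbf{Step 3 (Itô's formula for the volume).} With $F(A)=-\tfrac12\log\det A=\log Vol_n(\cE_A)-\log\omega_n$ and the matrix rule $d\log\det A_t=\tr(A_t^{-1}dA_t)-\tfrac12\tr(A_t^{-1}dA_tA_t^{-1}dA_t)$, one obtains
$$dF(A_t)=\bigl(\beta_t+\tfrac14c_n|A_t^{-1}|_{HS}^2\bigr)\,dt+dN_t-\tfrac12\sum_{v\in\ZZ^n\setminus\{0\}}\langle A_t^{-1}v,v\rangle\,d\ell^v_t,$$
where $N_t$ is a local martingale, $\beta_t\ge0$ is the drift's contribution, the term $c_n|A_t^{-1}|_{HS}^2\ge0$ comes from the convexity of $-\log\det$ along the Brownian noise, and each reflection at a lattice point $v$ costs $\tfrac12\langle A_t^{-1}v,v\rangle\,d\ell^v_t$; by Cauchy--Schwarz applied to $|v|^2=\langle A_t^{1/2}v,A_t^{-1/2}v\rangle$ with $\langle A_tv,v\rangle=1$, the weight satisfies $\langle A_t^{-1}v,v\rangle\ge|v|^4\ge1$ at reflection times. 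If, for a suitably chosen horizon $T$, one can show $\mathbb{E}[\log Vol_n(\cE_{A_T})]\ge\log(cn^2)$, then (the left side being non-constant) $\mathbb{P}[Vol_n(\cE_{A_T})\ge cn^2]>0$ and we are done. By the displayed identity this is the assertion that the cumulative net drift $\int_0^T(\beta_t+\tfrac14c_n|A_t^{-1}|_{HS}^2)\,dt-\tfrac12\int_0^T\sum_v\langle A_t^{-1}v,v\rangle\,d\ell^v_t$ has expectation at least $\log(cn^2)-\log Vol_n(\cE_{A_0})\approx\log(c'n)$.

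\textbf{Step 4 (the crux, and the main obstacle).} Both the positive drift and the reflection cost above are large --- of order $n$ over the relevant horizon, with the reflections essentially cancelling the drift, which is why the improvement is polynomial rather than exponential --- so everything hinges on a sharp lower bound for their \emph{difference}, and this is the step I expect to be the main obstacle. The reflection cost is governed by how many lattice points meet $\partial\cE_{A_t}$ and with what weight $\langle A_t^{-1}v,v\rangle$, so the point is to show that a volume-$(\sim n^2)$ origin-symmetric ellipsoid can be shaped --- exploiting its $\binom{n+1}{2}\sim n^2/2$ parameters, chosen \emph{stochastically} --- so that comparatively few lattice points touch its boundary. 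Equivalently, at a terminal (``stuck'') state Farkas' lemma gives $A_\infty^{-1}=\sum_v\mu_vvv^{\top}$ with $\mu_v\ge0$ supported on boundary lattice points and $\sum_v\mu_v=\tr(A_\infty A_\infty^{-1})=n$, whence $Vol_n(\cE_{A_\infty})^2=\omega_n^2\det\!\bigl(\sum_v\mu_vvv^{\top}\bigr)$; since $\det(\sum_v\mu_vvv^{\top})\le(\tfrac1n\sum_v\mu_v|v|^2)^n$ with equality only in the isotropic case, attaining $Vol_n\approx n^2$ matches the picture in the abstract: the evolution accumulates $\gtrsim cn^2$ lattice points on $\partial\cE_{A_\infty}$, of norm essentially the Minkowski scale $\omega_n^{-1/n}\asymp\sqrt{n/(2\pi e)}$, arranged nearly isotropically so that $\det(\sum_v\mu_vvv^{\top})$ is as large as the bound permits --- while one steers away from low-volume terminal states such as $A=I$ (legitimate: $\cE_I=B^n$ has $\pm e_i$ on its boundary and $I=\sum_ie_ie_i^{\top}$, but $Vol_n(\cE_I)=\omega_n$). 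The integrality of $\ZZ^n$ has to enter decisively here --- random lattices give only $\delta_n\gtrsim n2^{-n}$ --- for instance through Cauchy--Binet, $\det(\sum_v\mu_vvv^{\top})=\sum_{|S|=n}(\det V_S)^2\prod_{i\in S}\mu_i$ where $V_S$ has columns $\{v_i:i\in S\}$ and $\det V_S\in\ZZ$, privileging configurations in general linear position. Carrying this out quantitatively --- a judicious choice of the drift, diffusion coefficient and reflection direction of $(A_t)$; a priori bounds keeping $A_t^{-1}$ from becoming too eccentric (using $Vol_n(\cE_{A_t})\le2^n$ from Minkowski's first theorem); non-degeneracy and anti-concentration estimates for the local times $\ell^v_t$; and an averaging argument in the spirit of the Siegel summation formula and the Poisson heuristic recalled later, certifying a positive-probability set of trajectories with $Vol_n(\cE_{A_t})\ge cn^2$ --- is the heart of the matter.
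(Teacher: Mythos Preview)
Your reduction in Step~1 is correct and coincides with the paper's. The idea of running a stochastic process in $\RR^{n\times n}_{symm}$ and applying It\^o to $\log\det A_t$ is also the right theme. But the proposal is not a proof: Step~4 is a description of the obstacle, not a resolution of it. You explicitly say that controlling the difference between the drift and the reflection cost ``is the step I expect to be the main obstacle'' and that ``carrying this out quantitatively \ldots\ is the heart of the matter.'' Nothing in Steps~1--3 forces the expected log-volume past $\log(cn^2)$, so what you have written is a strategy, not an argument.

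There are also two specific points where your strategy diverges from one that works. First, the process. You propose \emph{normally reflected} Brownian motion in $\cD$: when a facet $\{\langle Av,v\rangle=1\}$ is hit, you push back along $vv^{\top}$ and accrue local time $\ell^v$. The paper does something different and structurally much cleaner: when a lattice point $v$ reaches $\partial\cE_{A_t}$ it is \emph{absorbed}, and from then on the Dyson Brownian increment is \emph{orthogonally projected} onto the subspace $F_t=\{B:\langle Bv,v\rangle=0\ \text{for all contacts }v\}$, so that $\langle A_t v,v\rangle\equiv 1$ thereafter. This replaces your uncontrolled local-time term by a clean dimension count: $N_t:=\dim F_t\ge \binom{n+1}{2}-\tfrac12|\partial\cE_t\cap L|$, and the It\^o correction in $d(\log\det A_t)$ is bounded below by $\tfrac12\|A_t\|_{op}^{-2}N_t\,dt$. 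Thus the volume-increasing noise has $\sim n^2/2$ active directions until $\sim n^2$ contacts have accumulated, which is exactly the mechanism producing the factor $n^2$. In your reflected picture this mechanism is absent: you can hit the same facet repeatedly, the local times $\ell^v$ have no a~priori bound, and the ``positive drift minus reflection cost'' balance you describe has no reason to come out $\gtrsim\log n$ rather than $O(1)$.

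Second, the role of randomness. You write that ``the integrality of $\ZZ^n$ has to enter decisively here --- random lattices give only $\delta_n\gtrsim n2^{-n}$.'' This is a misconception. The paper works with a \emph{random} lattice $L$ of covolume $Vol_n(B^n)$ and uses the Siegel summation formula, not any arithmetic of $\ZZ^n$, to bound $\EE|\partial\cE_t\cap L|$: for each $0\neq x\in L$ one bounds $\PP(x\in\partial\cE_t)$ by a Gaussian tail via the reflection principle for the scalar martingale $\langle A_t x,x\rangle$, sums over $x$, and then averages over $L$ by Siegel to turn the lattice sum into an integral. Choosing $T\asymp n^{-2}\log n$ balances the $n^2T$ gain in $-\EE\log\det A_T$ against the integrated contact count, yielding $\det A_T\le C/n^4$ with positive probability. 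None of the Cauchy--Binet/integrality heuristics you sketch are needed, and the ``random lattices only give $n\cdot 2^{-n}$'' barrier you invoke is precisely what the paper overcomes.

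In short: switch from reflection to projection (absorb contact points and restrict the Brownian motion to $F_t$), replace the unspecified drift by the It\^o drift of $\log\det$ alone, and replace the integrality speculation by the Siegel mean-value bound on $\EE|\partial\cE_t\cap L|$. Those three changes are what turn your outline into the actual proof.
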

        The universal constant $c$ arising from our proof of Theorem \ref{thm2} can probably be computed numerically to a reasonable degree of accuracy; see Remark \ref{rem_951} below.
        Venkatesh \cite{venkatesh} conjectures that $2^{n} \delta_n$ grows at most polynomially in $n$.
        It is not entirely unlikely that Theorem \ref{thm2} is tight, up to a logarithmic correction. As for known upper bounds for $\delta_n$, in a short 1929 paper, Blichfeldt \cite{blich} proved that
        $$ \delta_n \leq \frac{n+2}{2} \cdot 2^{-n/2}. $$
        See also  Rankin \cite{rankin} and Rogers \cite{rogers58}. Kabatjanski\u{\i} and Leven\v{s}te\u{\i}n \cite{KL} improved the bound to roughly $\delta_n \lesssim (0.66)^n$,
        a result subsequently sharpened by constant factors by Cohn and Zhao \cite{CZ}
        and by Sardari and Zargar \cite{SZ}. These upper bounds also apply to non-lattice sphere packings.
        There is still a large gap between the known lower bound and the known upper bound
        for the optimal density of a sphere packing in high dimension. The precise optimal density is currently known in dimensions $2,3, 8$ and $24$,
        see Cohn \cite{C2} and references therein.

        \medskip
        By considering the lattice sphere packing $x + K/2 \ \ (x \in L)$,
        Theorem \ref{thm2} is easily seen to be equivalent to the following:

        \begin{theorem} Let $n \geq 2$ and let
        	$K \subset \RR^n$ be a Euclidean ball centered at the origin of volume
        \begin{equation}  \Vol_n(K) = c n^2. \label{eq_1533} \end{equation}
         Then there exists a lattice $L \subset \RR^n$ of covolume one with
        $ L \cap K = \{ 0 \}$. Here, $c > 0$ is a universal constant.
        \label{thm1}
        \end{theorem}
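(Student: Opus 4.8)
The plan is to construct the lattice $L$ not as a deterministic object but as a random lattice obtained by a continuous-time stochastic evolution, and to show that with positive probability the evolution terminates at a unimodular lattice avoiding $K$. Since $K$ is a Euclidean ball, by applying a volume-preserving linear map we may equivalently fix the lattice to be $\ZZ^n$ and instead evolve an origin-symmetric ellipsoid $\cE_t$: we seek $\cE$ with $Vol_n(\cE) = c n^2$ and $\cE \cap \ZZ^n = \{0\}$. Start from a small ball $\cE_0$ containing no nonzero lattice points, and let it grow. As $t$ increases, lattice points will generically hit $\partial \cE_t$; the idea is to deform the ellipsoid — keeping its volume on the prescribed growth schedule — so as to push each newly-arriving lattice point tangentially along the boundary rather than letting it enter the interior. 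Concretely, $\cE_t = \{x : \langle A_t x, x\rangle \le 1\}$ for a positive-definite $A_t$, and one runs a diffusion (or a carefully chosen drift-plus-noise) on the space of such $A_t$, normalized so $\det A_t$ decreases at the rate dictated by $Vol_n(\cE_t) \sim (\det A_t)^{-1/2}$; the evolution is reflected, or the drift is corrected, each time a lattice point reaches the boundary, to ensure the boundary lattice points never cross inward.

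The key steps, in order. First, set up the state space: origin-symmetric ellipsoids of a given volume, i.e.\ $SL_n$-type quotient, with the natural diffusion (Brownian motion on the symmetric space, or an Ornstein–Uhlenbeck-type process), and write down the SDE for $A_t$ together with the volume-normalization constraint. Second, identify the "active" lattice points — those currently on $\partial \cE_t$ — and derive, via Itô's formula applied to $f_v(t) = \langle A_t v, v\rangle$ for $v \in \ZZ^n \setminus \{0\}$, the condition on the drift/diffusion that keeps each active $f_v$ from dropping below $1$; this is a Skorokhod-type reflection at each active constraint, and one must check the reflection is well-defined (the active constraints are generically few and in "general position" relative to the noise). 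Third — the crucial counting step — show that as $t$ grows so that $Vol_n(\cE_t)$ reaches $c n^2$, the expected number of lattice points that have been absorbed onto the boundary is $\Theta(n^2)$, while the process almost surely never lets any lattice point into the interior; the gain over the classical $\Theta(n)$ bound comes from the fact that the stochastic deformation lets the ellipsoid "hold" many more points on its boundary simultaneously than a dilating ball can. Fourth, a stopping-time / positivity argument: show the evolution can be run up to the target volume with positive probability without the ellipsoid degenerating (smallest eigenvalue of $A_t$ staying bounded away from $0$ and $\infty$ appropriately), and conclude existence of a realization $\cE$ with the desired properties; then undo the linear map to get $L$.

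The main obstacle I expect is the third step — the quantitative lower bound of order $n^2$ on the volume attainable while maintaining the emptiness of the interior. This requires a delicate second-moment or martingale computation tracking the joint behavior of the family $\{f_v(t)\}_{v \in \ZZ^n \setminus\{0\}}$: one needs that the reflection events, which each "cost" a bit of volume-deformation budget, are efficient enough that $\Theta(n^2)$ of them can be accommodated. Heuristically this should connect to a Poisson-type approximation for the arrivals of lattice points at $\partial\cE_t$ (in the spirit of the Poisson heuristic attributed to Rogers and used by Schmidt), where the rate of arrivals and the effect of each reflection on $\det A_t$ must be balanced to squeeze out the extra factor of $n$. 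A secondary difficulty is purely technical: ensuring the reflected diffusion on the cone of positive-definite matrices with finitely many active linear constraints is well-posed (existence, uniqueness, non-degeneracy of the normal directions $v v^{\mathsf T}$ at simultaneously active constraints), which should follow from genericity but needs care near times when the set of active lattice points changes.
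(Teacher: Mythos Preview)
Your high-level strategy---evolve an ellipsoid stochastically, absorb lattice points onto its boundary, argue the volume reaches $cn^2$---matches the paper's. But two of your proposed mechanisms differ from the paper's in ways that matter, and one of them is a genuine gap. First, the paper does \emph{not} fix the lattice to $\ZZ^n$; it takes a \emph{random} lattice $L$ uniformly distributed on the space of covolume-$Vol_n(B^n)$ lattices, so that the Siegel summation formula $\EE_L \sum_{0\neq x\in L}\vphi(x) = Vol_n(B^n)^{-1}\int_{\RR^n}\vphi$ applies. This is precisely what controls your ``crucial counting step'': the absorption rate of lattice points is bounded by computing this integral for a suitable Gaussian-tail function $\Phi$ and applying Markov's inequality to pick a good $L$. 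For the specific lattice $\ZZ^n$ the Poisson heuristic has no direct justification, and I do not see how your third step would go through. Second, the paper uses orthogonal \emph{projection}, not Skorokhod reflection: once $v$ hits the boundary, the Dyson Brownian motion on $\RR^{n\times n}_{symm}$ is projected onto the hyperplane $\{B:\langle B,v\otimes v\rangle=0\}$, which holds $\langle A_t v,v\rangle=1$ as an equality for all later times. This sidesteps the well-posedness issues you flag and, more importantly, makes the volume analysis clean.

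The conceptual point you have partly inverted is where the $n^2$ comes from. The volume is \emph{not} grown on a prescribed schedule; $(A_t)$ is a pure martingale, and the volume growth emerges from the It\^o drift of $\log\det A_t$. It\^o's formula gives a drift bounded by $-\tfrac12\|A_t\|_{op}^{-2}\dim F_t$, where $F_t\subset\RR^{n\times n}_{symm}$ is the subspace onto which the Brownian motion is projected and $\dim F_t\ge \tfrac{n(n+1)}{2}-\tfrac12|\partial\cE_t\cap L|$. So as long as \emph{few} points have been absorbed, the drift is of order $-n^2/4$. Running to time $T=16n^{-2}\log n$ then yields $\EE\log\det A_T\le C-4\log n$, provided the correction $\tfrac14\int_0^T\EE|\partial\cE_t\cap L|\,dt$ is $O(1)$---and this is exactly what the Siegel computation delivers. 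Thus the $n^2$ comes from the dimension of $\RR^{n\times n}_{symm}$ appearing as the strength of the It\^o drift, not from the ellipsoid holding $\Theta(n^2)$ boundary points; the argument works precisely because the absorbed set stays small enough for the drift to remain near $-n^2/4$. Your ``reflection plus prescribed volume'' setup obscures this mechanism and would make the quantitative step much harder.
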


        An origin-symmetric ellipsoid in $\RR^n$ is the image of the unit ball $B^n$ under an invertible, linear map $T: \RR^n \rightarrow \RR^n$.
        Consider the lattice $L$ and the Euclidean ball $K$ from Theorem \ref{thm1}.
        Since $L$ may be represented as $L = T(\ZZ^n)$ for a linear map $T: \RR^n \rightarrow \RR^n$ with $|\det(T)| = 1$,
        we conclude from Theorem \ref{thm1} that the origin-symmetric ellipsoid $$ \cE = T^{-1}(K) \subset \RR^n $$ has volume $ c n^2, $ yet
        it contains no points from $\ZZ^n$ other than the origin. This implies the statement in the abstract of this paper.
        We conjecture that the conclusion of Theorem \ref{thm1} holds true for any origin-symmetric convex body $K \subset \RR^n$
        satisfying (\ref{eq_1533}), and not only for Euclidean balls and ellipsoids. See Schmidt \cite{schmidt1, schmidt2}
        for a proof under the weaker assumption that $\Vol_n(K) \leq c n$.

        \medskip Before presenting the main ideas of the proof of Theorem \ref{thm1},
        let us briefly discuss the proof of (\ref{eq_1659}) from Rogers \cite{rogers}.
        Consider a random lattice $L \subset \RR^n$ satisfying $\Vol_n(\RR^n / L) = \Vol_n(B^n)$. By using the Siegel summation formula, it is shown
         that with positive probability,
        $$ \prod_{i=1}^n \lambda_i \geq c n $$
        where $0 < \lambda_1 \leq \lambda_2 \leq \ldots \leq \lambda_n$ are the successive minima of the lattice $L$. Minkowski's second theorem
        is then used in order to find a linear map $T: \RR^n \rightarrow \RR^n$ with $|\det(T)| \geq \prod_i \lambda_i$ such that $T(B^n) \cap L = \{ 0 \}$.
        Intuitively, the ellipsoid $T(B^n)$ constructed this way only ``interacts'' with $n$ vectors from the lattice -- the ones corresponding to the successive minima.

    \medskip In contrast, an ellipsoid in $\RR^n$ is determined by $n(n+1)/2$ parameters, and it is reasonable to expect it
    to ``interact'' with roughly $n^2$ lattice points. In fact, it is not too difficult
    to show that there exists an origin-symmetric ellipsoid $\cE \subset \RR^n$ with $\cE \cap \ZZ^n = \{ 0 \}$ such that
    \begin{equation}   |\partial \cE \cap \ZZ^n| \geq n(n+1). \label{eq_1815} \end{equation}
    Here, $| A |$ is the cardinality of the set $A \subset \RR^n$, and $\partial \cE$ is the boundary of the ellipsoid $\cE$.
    See Theorem VIII in Cassels \cite[Section V.8]{cassels} or Remark \ref{rem_1814} below for a proof of (\ref{eq_1815}).

    \medskip Our construction of the ellipsoid $\cE \subset \RR^n$ begins
    with a random lattice $L \subset \RR^n$ satisfying $\Vol_n(\RR^n / L) = \Vol_n(B^n)$.
    Consider a relatively large Euclidean ball
    disjoint from $L \setminus \{0 \}$, and let us run a Brownian-type stochastic motion in the space of ellipsoids, starting from this Euclidean ball.
    The crucial property of our stochastic process is that whenever the  evolving ellipsoid $$ \cE_t = \{ x \in \RR^n \, ; \, A_t x \cdot x < 1 \} $$ hits a non-zero lattice point, it keeps it on its boundary at all later times. In other words, if the ellipsoid hits the point $0 \neq x_0 \in L$ at time $t_0$, then we make sure
    that for $t > t_0$,
    \begin{equation}  A_t x_0 \cdot x_0 = 1. \label{eq_1207} \end{equation}
    Equation (\ref{eq_1207}) imposes a one-dimensional linear constraint on the matrix $A_t$, and  the stochastic evolution of $A_t$ may be continued in the linear subspace
    of matrices obeying this constraint.
    The vector  space of all real symmetric $n \times n$ matrices, denoted by $$ \RR^{n \times n}_{\mathrm{sym}}, $$
    has dimension $n(n+1)/2$. Hence our evolving ellipsoid freezes only when it has accumulated $n(n+1)$ lattice points on its boundary; note that
    these contact points come in pairs: $x_0 \in L$ and $-x_0 \in L$. We remark that similar ideas were used in different contexts in constructions described
    in Lovett and Meka \cite{LM} as well as in \cite{complex_waist}.

    \medskip
    Intuitively, the random lattice $L$ behaves somewhat like a Poisson process in $\RR^n$ of intensity $$ 1/\Vol_n(B^n). $$ 
    Thus, one might expect the ellipsoid to cover a total volume of about $c n^2 \cdot \Vol_n(B^n)$ during its evolution, since it manages to find $n(n+1)$ lattice points.
    Our evolving ellipsoid expands and contracts in a random fashion, and its volume is not monotone. Nevertheless,
    we hope to define an appropriate stochastic motion such that the evolving ellipsoid does
    not recede significantly from regions near the accumulated contact points.
    In this case, the evolving ellipsoid is expected to reach a volume of $c n^2 \cdot \Vol_n(B^n)$ while remaining $L$-free.

   \medskip In our construction, we let the matrix $A_t$ evolve as a Brownian motion in the space of symmetric matrices (``Dyson Brownian motion''),
subject to linear constraints of the form (\ref{eq_1207}). These constraints also ensure that the process never leaves the cone of symmetric, positive-definite matrices.
The volume of the ellipsoid $\cE_t$ is determined by $\det A_t$. Fortunately, $\log\det$ is a concave function on the cone of positive-definite matrices,
and since $A_t$ is a martingale, the function $$ t \mapsto \EE \log \det A_t $$ is non-increasing.
In fact, the change in the quadratic variation of $A_t$ is determined by the linear subspace of matrices satisfying the constraints at any given time, and hence
the time derivative of $\EE \log \det A_t$ is related to the number of contact points. By analyzing the rate at which the ellipsoid $\cE_t$
accumulates lattice points, we may therefore control the growth rate of $\EE \log \Vol_n(\cE_t)$. This enables us to show that at a certain fixed time $T>0$,
with positive probability, the ellipsoid $\cE_T$ has a large volume while containing no lattice points in its interior.

    \medskip In the remainder of this paper we transform these vague heuristics into
    a mathematical proof. In Section \ref{sec2} we construct the stochastically evolving
    ellipsoid for a given lattice (or a lattice-like set). In Section \ref{sec3} we study the volume growth of the evolving ellipsoid,
    and in Section \ref{sec4} we analyze the  rate at which it accumulates lattice points.
    In Section \ref{sec5} we discuss random lattices, and complete the proof of Theorem \ref{thm1}.

    \medskip The linear space $\RR^{n \times n}_{\mathrm{sym}}$ is a Euclidean space equipped with the scalar product
    $$ \langle A, B \rangle = \Tr[AB] \qquad \qquad \qquad (A,B \in \RR^{n \times n}_{\mathrm{sym}}), $$
    where $\Tr[A]$ is the trace of the matrix $A \in \RR^{n \times n}$.
    We denote the collection of positive-definite, symmetric $n \times n$ matrices by
    $$ \RR^{n \times n}_+ \subset \RR^{n \times n}_{\mathrm{sym}}. $$
     We write that $A \geq B$ (respectively, $A > B$) for two matrices
    $A, B \in \RR^{n \times n}_{\mathrm{sym}}$ if $A - B$ is positive semi-definite (respectively, positive-definite). We write $\id$ for the identity matrix.
    The Euclidean norm of $x = (x_1,\ldots,x_n) \in \RR^n$ is denoted by $|x| = \sqrt{\sum_i x_i^2}$.
    For $x, y \in \RR^n$ we write $x \cdot y = \sum_{i=1}^n x_i y_i$ for their standard
    scalar product, and $x \otimes y = (x_i y_j)_{i,j=1,\ldots,n} \in \RR^{n \times n}$ for their tensor product.
    The natural logarithm is denoted by $\log$. A subset $A \subset \RR^n$ is origin-symmetric if $A = -A$.
    All ellipsoids are assumed to be open and origin-symmetric. A random variable $X$ is centered when $\EE X = 0$.

    \medskip Throughout this paper, we write $c, C, \tilde{C}, c', \hat{C}, \bar{C}$ etc. for various positive universal constants
    whose value may change from one line to the next. We write $C_0, C_1, c_0$ etc. -- that is, the letters $C$ or $c$ with numerical subscripts --
    for positive universal constants that remain fixed throughout the paper. In proving Theorem \ref{thm1}, we may assume that the dimension $n$ is sufficiently large;
    this is our standing assumption throughout the text.

    \medskip
    \emph{Acknowledgement}. I am grateful to Barak Weiss for interesting discussions and for his encouragement.
    Supported by a grant from the Israel Science Foundation (ISF).

\section{Constructing a stochastically evolving  ellipsoid}
\label{sec2}

Let $L \subset \RR^n$ be a discrete subset of $\RR^n$ such that
for any  origin-symmetric ellipsoid $\cE \subset \RR^n$ with $\cE \cap L \subseteq \{ 0 \}$, we have
\begin{equation}  \Vol_n(\cE) \leq C_L \qquad \text{and} \qquad |\partial \cE \cap L| \leq \tilde{C}_L, \label{eq_1358} \end{equation}
for some constants $C_L, \tilde{C}_L > 0$ depending only on $L$. We refer to such a discrete set $L$ as a {\it lattice-like set}.
The most important case is when $L \subset \RR^n$ is a lattice; in this case the inequalities in (\ref{eq_1358}) hold true with $C_L = 2^n \cdot \Vol_n(\RR^n / L)$,
by Minkowski's first theorem, and with
\begin{equation}  \tilde{C}_L = 2 \cdot (2^n - 1) \label{eq_643} \end{equation} by an elementary argument which we reproduce in the Appendix below.
For a symmetric matrix $A \in \RR^{n \times n}_{\mathrm{sym}}$ we consider the open set
\begin{equation}  \cE_A = \left \{ x \in \RR^n \, ; \, A x \cdot x < 1 \right \}. \label{eq_1148} \end{equation}
Its boundary $\partial \cE_A$ is the collection of all $x \in \RR^n$ with $A x \cdot x = 1$. The matrix $A \in \RR^{ n \times n}_{\mathrm{sym}}$
is positive-definite if and only if the set $\cE_A$ is an  ellipsoid, in which case
\begin{equation}  \Vol_n(\cE_A) = \det(A)^{-1/2} \cdot \Vol_n(B^n).
	\label{eq_1128} \end{equation}
When $A$ is  not positive-definite, necessarily $\Vol_n(\cE_A) = \infty$.
We say that an open subset  $\cE \subseteq \RR^n$ is {\it  $L$-free} if $\cE \cap L \subseteq \{ 0 \}$.
When we write that the matrix $A \in \RR^{n \times n}_{\mathrm{sym}}$ is $L$-free, we mean that the open set $\cE_A$ is $L$-free. It follows from (\ref{eq_1358}) that the volume
of an $L$-free ellipsoid is at most $C_L$, and that it contains at most $\tilde{C}_L$ points on its boundary.

\medskip A point belonging both to the boundary $\partial \cE_A$ and to the discrete set $L$ is referred to as a contact point. The following lemma describes a continuous deformation of an $L$-free ellipsoid that keeps all of its contact points. We defer the standard proof to the Appendix.

\begin{lemma} Let $M_t \in \RR^{n \times n}_{\mathrm{sym}} \ (t \geq 0)$ be a family of matrices depending continuously on $t \geq 0$, such that not all of the matrices are positive-definite. 	Assume that the matrix $M_0 \in \RR^{n \times n}_{\mathrm{sym}}$ is positive-definite and $L$-free, and that for  all $t \geq 0$,
\begin{equation}
	\partial \cE_{M_0} \cap L \subseteq \partial \cE_{M_t} \cap L. \label{eq_1935} \end{equation}
Then the following hold:
\begin{enumerate}
	\item[(A)] Denote $$ \tau := \sup \left \{ \, t \geq 0 \, ; \, M_s \ \textrm{is} \ L\textrm{-free with } \partial \cE_{M_s} \cap L = \partial \cE_{M_0} \cap L \textrm{ for all } s \in [0,t] \, \right \}. $$
Then $0 < \tau < \infty$.
	\item[(B)] The symmetric matrix $M_t$ is positive-definite and $L$-free for all $0 \leq t \leq \tau$.
	\item[(C)] We gained at least one additional contact point at time $\tau$. That is,
\begin{equation}  \partial \cE_{M_0} \cap L \subsetneq \partial \cE_{M_\tau} \cap L. \label{eq_1608} \end{equation}
\end{enumerate}
\label{lem_2104}
\end{lemma}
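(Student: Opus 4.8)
The plan is to argue by continuity and a pigeonhole-type finiteness, using the hypotheses $(\ref{eq_1358})$ in an essential way. First I would address part (A). Since $M_0$ is positive-definite, the set of $t$ for which $M_t$ is positive-definite is relatively open and contains $0$; by hypothesis not all $M_t$ are positive-definite, so there is a first time at which positive-definiteness can fail, and $\tau$ (as defined) is at most that time, hence $\tau < \infty$. To see $\tau > 0$: positive-definiteness is an open condition, and $L$-freeness of a positive-definite $M_0$ together with $(\ref{eq_1935})$ is also stable for small $t$. The one subtle point is that $L$ is discrete but infinite, so one must rule out new contact points rushing in from infinity in arbitrarily short time. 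This is exactly where the first inequality in $(\ref{eq_1358})$ is used: any $L$-free ellipsoid has volume at most $C_L$, hence is contained in a fixed ball $R \cdot B^n$ with $R = R(C_L, n)$; thus only the finitely many lattice points in, say, $2R \cdot B^n$ are relevant, and for each such $x$ the quantity $M_t x \cdot x$ is continuous in $t$, so it stays bounded away from $1$ for small $t$ unless $x \in \partial \cE_{M_0}$. This gives $\tau > 0$ and establishes (A).

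Next, part (B). For $t \in [0, \tau)$ the matrix $M_t$ is $L$-free by definition of $\tau$, and it is positive-definite because an $L$-free $\cE_{M_t}$ with $M_t$ not positive-definite would have infinite volume by $(\ref{eq_1128})$ and the remark following it, contradicting the volume bound $C_L$ in $(\ref{eq_1358})$. It remains to handle $t = \tau$ itself. By continuity, $M_\tau$ is positive semi-definite (a limit of positive-definite matrices), and $\cE_{M_\tau}$ is $L$-free: if some $0 \neq x \in L$ had $M_\tau x \cdot x < 1$, then by continuity $M_s x \cdot x < 1$ for $s$ slightly less than $\tau$, contradicting $L$-freeness on $[0,\tau)$. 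An $L$-free $\cE_{M_\tau}$ forces $M_\tau$ positive-definite by the same volume argument as above. Hence $M_\tau$ is positive-definite and $L$-free, proving (B) on all of $[0,\tau]$.

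Finally, part (C). Suppose for contradiction that $\partial \cE_{M_\tau} \cap L = \partial \cE_{M_0} \cap L$. Then $M_\tau$ is an $L$-free, positive-definite matrix with exactly the prescribed contact set. I would now re-run the local argument from (A) starting at time $\tau$ instead of $0$: positive-definiteness and $L$-freeness with unchanged contact set persist on $[\tau, \tau + \eps)$ for some $\eps > 0$ (again using the volume bound to restrict attention to finitely many lattice points, and continuity of each $M_t x \cdot x$). But this contradicts the definition of $\tau$ as the supremum of times up to which $M_s$ is $L$-free with contact set equal to $\partial \cE_{M_0} \cap L$. Therefore $(\ref{eq_1608})$ holds.

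The main obstacle, and the only place requiring genuine care rather than routine continuity, is the interplay between the infinitude of $L$ and the need for uniform-in-short-time control: naively, infinitely many lattice points could each contribute a near-contact, and one needs the a priori volume bound $C_L$ from $(\ref{eq_1358})$ — which confines every $L$-free ellipsoid to a fixed bounded region — to reduce every local-in-time statement to a finite problem about the finitely many lattice points in a fixed ball. Once that reduction is in place, all three parts follow from elementary continuity and openness arguments.
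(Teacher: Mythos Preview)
Your overall architecture is the same as the paper's, and parts (B) and (C) are essentially correct. There is, however, a genuine error in your argument for $\tau>0$. You claim that ``any $L$-free ellipsoid has volume at most $C_L$, hence is contained in a fixed ball $R\cdot B^n$ with $R=R(C_L,n)$.'' This implication is false: an ellipsoid of volume $1$ can have arbitrarily large diameter, so bounded volume gives no containment in a ball of radius depending only on $C_L$ and $n$. The paper does \emph{not} use the volume bound~(\ref{eq_1358}) for the finiteness reduction. Instead it uses positive-definiteness of $M_0$ directly: since $M_0\geq\eps_1\cdot\id$ for some $\eps_1>0$, continuity gives $M_t\geq(\eps_1/2)\cdot\id$ on some interval $[0,t_1]$, whence $\overline{\cE_{M_t}}\subset\sqrt{2/\eps_1}\,B^n$ for all $t\in[0,t_1]$. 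Only after this does one restrict attention to the finitely many lattice points in that fixed ball. So the confinement comes from the initial matrix being positive-definite, not from the lattice-like hypothesis.

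A secondary looseness: in (B) you argue that an $L$-free $\cE_{M_t}$ with $M_t$ not positive-definite would have infinite volume, contradicting the bound $C_L$. But~(\ref{eq_1358}) is stated only for ellipsoids, and when $M_t$ is not positive-definite the set $\cE_{M_t}$ is not an ellipsoid, so this is not a direct contradiction. The paper's route is cleaner: while $M_s$ is positive-definite and $L$-free, $\cE_{M_s}$ \emph{is} an $L$-free ellipsoid and~(\ref{eq_1358}) gives $\det M_s\geq (Vol_n(B^n)/C_L)^{2}>0$ uniformly; this determinant lower bound then passes to the limit, keeping $M_t$ positive-definite throughout $[0,\tau]$ and forcing $\tau<\infty$. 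In short, the volume bound's real job is to keep the determinant away from zero along the path --- which is what drives $\tau<\infty$ and positive-definiteness at $\tau$ --- not to trap ellipsoids inside a fixed ball for the local $\tau>0$ step where you invoked it.
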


We recall that the {\it standard Brownian motion} in a finite-dimensional, real, inner product space~$V$ is a centered, continuous, Gaussian process $(W_t)_{t \geq 0}$ attaining values in $V$, with $W_0 = 0$, and with independent increments\footnote{i.e., $W_t - W_s$ is independent of $W_s - W_r$ for all $0 \leq r < s < t$.}, such that for all $t > s \geq 0$ and a linear functional $f: V \rightarrow \RR$,
$$ \EE |f(W_t - W_s)|^2 = (t -s) \|f\|^2. $$
Here, $\|f\| = \sup_{0 \neq v \in V} |f(v)| / \|v\|$ and $\| v \| = \sqrt{\langle v, v \rangle}$. We refer the reader e.g. to Le Gall~\cite{legall}, {\O}ksendal \cite{O} or Revuz and Yor \cite{RY} for background on
Brownian motion and stochastic analysis.

\medskip
The {\it Dyson Brownian motion} is a standard Brownian motion $(W_t)_{t \geq 0}$ in the Euclidean space $\RR^{n \times n}_{\mathrm{sym}}$.
 For $A \in \RR^{n \times n}_{\mathrm{sym}}$  consider the subspace
\begin{equation}  F_A = \left \{ B \in \RR^{n \times n}_{\mathrm{sym}} \, ; \, \forall x \in \partial \cE_A \cap L, \ B x \cdot x = 0 \right \}, \label{eq_1730} \end{equation}
where $\cE_A$ is defined in (\ref{eq_1148}). We write $\pi_A: \RR^{n \times n}_{\mathrm{sym}} \to \RR^{n \times n}_{\mathrm{sym}}$ for the orthogonal projection operator
onto the subspace $F_A$.
The following lemma explains how to randomly evolve an $L$-free ellipsoid until we gain an additional contact point.

\begin{lemma} Let $M_0 \in \RR^{n \times n}_+$ be an $L$-free matrix with $ F_{M_0} \neq \{ 0 \}$.
Let $(W_t)_{t \geq 0}$ be a Dyson Brownian motion in $\RR^{n \times n}_{\mathrm{sym}}$. For $t \geq 0$
denote
\begin{equation}  M_t = M_0 + \pi_{M_0}(W_t). \label{eq_1208} \end{equation}
Then, with probability one, the random variable $$ \tau: = \sup \{ \, t \geq 0 \, ; \, M_s \ \textrm{is} \ L\textrm{-free with }
\partial \cE_{M_s} \cap L = \partial \cE_{M_0} \cap L \textrm{ for all } s \in [0,t] \, \}, $$
is non-zero and finite. Moreover, almost surely, for $0 \leq t \leq \tau$ the set $\cE_{M_t}$ is an $L$-free ellipsoid, and
\begin{equation} \partial \cE_{M_0} \cap L \subsetneq \partial \cE_{M_\tau} \cap L.
\label{eq_1740} \end{equation}
\label{lem_1800}
\end{lemma}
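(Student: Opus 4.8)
The plan is to reduce Lemma \ref{lem_1800} to the deterministic Lemma \ref{lem_2104} by checking that the random family $M_t = M_0 + \pi_{M_0}(W_t)$ satisfies the two hypotheses of that lemma almost surely. Continuity in $t$ is automatic since $(W_t)$ is a continuous process and $\pi_{M_0}$ is a fixed linear map. The constraint (\ref{eq_1935}) that every initial contact point stays a contact point holds deterministically for \emph{all} $t$: if $x \in \partial \cE_{M_0} \cap L$ then $M_t x \cdot x = M_0 x \cdot x + (\pi_{M_0} W_t) x \cdot x = 1 + 0 = 1$, because $\pi_{M_0} W_t \in F_{M_0}$ by definition of the projection and of $F_{M_0}$ in (\ref{eq_1730}). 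So the only genuinely probabilistic input needed is that, almost surely, not all of the matrices $M_t$ are positive-definite — equivalently, that the process leaves the positive-definite cone in finite time. Once this is granted, Lemma \ref{lem_2104} applies verbatim and yields: $0 < \tau < \infty$, the matrices $M_t$ are positive-definite and $L$-free for $0 \le t \le \tau$ (so $\cE_{M_t}$ is an $L$-free ellipsoid in that range), and $\partial \cE_{M_0} \cap L \subsetneq \partial \cE_{M_\tau} \cap L$, which is exactly (\ref{eq_1740}).

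So the heart of the matter, and the main obstacle, is showing that $M_t$ almost surely fails to be positive-definite for some $t$. The subspace $F_{M_0}$ is nonzero by hypothesis, so pick any $0 \neq B \in F_{M_0}$; then $W_t^B := \langle W_t, B\rangle/\|B\|^2 \cdot B$-type considerations show that $t \mapsto \langle \pi_{M_0}(W_t), B\rangle = \langle W_t, B \rangle$ is, after normalization, a one-dimensional Brownian motion, hence unbounded above and below almost surely. I would choose $B$ cleverly: since $F_{M_0}$ is a nontrivial subspace of symmetric matrices, it contains a matrix $B$ that is not positive semi-definite — indeed if every nonzero element of $F_{M_0}$ were positive semi-definite, then $F_{M_0}$ would also contain $-B$, a contradiction, so $F_{M_0}$ always contains matrices with a strictly negative eigenvalue. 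Fix a unit vector $v$ with $Bv\cdot v = -\beta < 0$. Along the direction $B$ the component of $M_t$ is driven by a genuine Brownian motion, while the components orthogonal to $B$ are also Brownian motions with bounded-in-probability behavior only on compact time intervals; but the point is simpler: $M_t v \cdot v = M_0 v \cdot v + \langle \pi_{M_0}(W_t), v \otimes v\rangle$, and $\langle \pi_{M_0}(W_t), v\otimes v \rangle = \langle W_t, \pi_{M_0}(v \otimes v)\rangle$ is a Brownian motion (up to time-scaling by $\|\pi_{M_0}(v\otimes v)\|^2$), which is unbounded below almost surely \emph{provided} $\pi_{M_0}(v \otimes v) \neq 0$. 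Choosing $v$ to be an eigenvector of the chosen $B \in F_{M_0}$ for a negative eigenvalue guarantees $\langle v \otimes v, B\rangle = Bv \cdot v < 0$, hence $\pi_{M_0}(v\otimes v) \neq 0$. Therefore $M_t v \cdot v \to -\infty$ along a sequence of times almost surely, so $M_t$ is eventually not positive-definite, as required.

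The remaining steps are bookkeeping. I would record that $(W_t)$ has continuous sample paths, so $M_t$ depends continuously on $t$; note that $\pi_{M_0}$ being an orthogonal projection means $\langle \pi_{M_0}(W_t), B \rangle = \langle W_t, \pi_{M_0} B\rangle = \langle W_t, B\rangle$ for $B \in F_{M_0}$, and that for any fixed $0 \neq C \in \RR^{n\times n}_{symm}$ the process $t \mapsto \langle W_t, C\rangle$ is a standard one-dimensional Brownian motion scaled by $\|C\|$ (immediate from the definition of standard Brownian motion in an inner product space, applying the linear functional $B \mapsto \langle B, C\rangle$ of norm $\|C\|$), hence almost surely unbounded in both directions by the law of the iterated logarithm or simple recurrence. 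Combining, the event ``$M_t$ is positive-definite for all $t \ge 0$'' has probability zero. On the complementary almost-sure event, all hypotheses of Lemma \ref{lem_2104} hold for the path $t \mapsto M_t$, and conclusions (A), (B), (C) of that lemma translate directly into the stated assertions about $\tau$, about $\cE_{M_t}$ being an $L$-free ellipsoid for $0 \le t \le \tau$, and into (\ref{eq_1740}). I do not expect any subtlety beyond the choice of the test vector $v$; the only thing to be careful about is that $\pi_{M_0}(v \otimes v)$ is genuinely nonzero, which is why we take $v$ to be a negative-eigenvalue eigenvector of an element of $F_{M_0}$ rather than an arbitrary vector.
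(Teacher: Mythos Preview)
Your proof is correct and follows essentially the same route as the paper: verify the hypotheses of Lemma~\ref{lem_2104} (continuity, preservation of contact points via $\pi_{M_0}(W_t)\in F_{M_0}$, and eventual failure of positive-definiteness via a one-dimensional Brownian motion), then invoke that lemma. The only cosmetic difference is in producing a vector $v$ with $\pi_{M_0}(v\otimes v)\neq 0$: the paper just observes that rank-one symmetric matrices span $\RR^{n\times n}_{symm}$ so some $x_0\otimes x_0$ must survive the projection, whereas you construct $v$ explicitly as a negative-eigenvalue eigenvector of a non-PSD element of $F_{M_0}$ --- both arguments are short and valid.
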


\begin{proof} Since $ F_{M_0} \neq \{ 0 \}$, the linear projection $\pi_{M_0}: \RR^{n \times n}_{\mathrm{sym}} \rightarrow \RR^{n \times n}_{\mathrm{sym}}$ is not identically zero. Hence there exists $x_0 \in \RR^n$ such that $\pi_{M_0}(x_0 \otimes x_0) \neq 0$. Almost surely, a Brownian motion in $\RR$ does not remain bounded from below indefinitely. Therefore, almost surely
\begin{equation}  \liminf_{t \rightarrow \infty} \pi_{M_0}(W_t) x_0 \cdot x_0 =
 \liminf_{t \rightarrow \infty} \langle W_t, \pi_{M_0}(x_0 \otimes x_0) \rangle =
  -\infty. \label{eq_1750} \end{equation}
 It follows from (\ref{eq_1208}) and (\ref{eq_1750}) that almost surely, $(M_t)_{t \geq 0}$ is {\it not} a family of positive-definite matrices.
In order to verify all of the other assumptions of Lemma \ref{lem_2104}, we note that if $x \in \partial \cE_{M_0} \cap L$
then by (\ref{eq_1730}),
\begin{equation}  B x \cdot x = 0 \qquad \qquad \qquad \text{for all} \ B \in F_{M_0}. \label{eq_1659_} \end{equation}
Recall that $\pi_{M_0}(W_t) \in F_{M_0}$. Thus, by (\ref{eq_1659_}), for all $t \geq 0$ and $x \in \partial \cE_{M_0} \cap L$,
$$ M_t x \cdot x = M_0 x \cdot x + \pi_{M_0}(W_t) x \cdot x = M_0 x \cdot x = 1. $$
Hence $x \in \partial \cE_{M_t} \cap L$ for all $t \geq 0$. We have thus shown that almost surely, for all $t \geq 0$,
$$
\partial \cE_{M_0} \cap L \subseteq \partial \cE_{M_t} \cap L.
$$
We have verified all of the assumptions of Lemma \ref{lem_2104}. We may therefore apply the lemma, and conclude that almost surely the random variable  $\tau$
is finite and non-zero. From conclusion (B) of Lemma \ref{lem_2104} we learn that almost surely, for all $0 \leq t \leq \tau$
the set $\cE_{M_t}$ is an $L$-free ellipsoid. Conclusion (C) of Lemma \ref{lem_2104} implies (\ref{eq_1740}).
\end{proof}

Recall that the (completed) filtration associated with the Brownian motion $(W_t)_{t \geq 0}$
is $(\cF_t)_{t \geq 0}$, where $\cF_t$ is the $\sigma$-algebra generated by the random variables $(W_s)_{0 \leq s \leq t}$,
as well as by all subsets of all measurable null sets, see e.g. Le Gall \cite[Chapter 3]{legall}.
A stochastic process $(A_t)_{t \geq 0}$ is {\it adapted} to this filtration if for any fixed $t \geq 0$, the random variable $A_t$ is measurable with respect to~$\cF_t$.
It is a martingale if $\EE[ A_t | \cF_s ] = A_s$ for all $t \geq s \geq 0$.
A stopping time $\tau$ is a random variable attaining values in $[0, \infty)$ such that for any fixed $t \geq 0$, the event $\{ \tau \leq t \}$ is measurable with respect to~$\cF_t$.
For example, the random variable $\tau$ from Lemma \ref{lem_1800} is a stopping time.
The following proposition describes our construction of the stochastically evolving ellipsoid associated with the lattice-like set $L \subset \RR^n$.

\begin{proposition} Let $a_0 > 0$ be such that the matrix $a_0 \cdot \id \in \RR^{n \times n}$ is $L$-free.
Let $(W_t)_{t \geq 0}$ be a Dyson Brownian motion in $\RR^{n \times n}_{\mathrm{sym}}$. Then there exists a continuous stochastic process $(A_t)_{t \geq 0}$, attaining values in $\RR^{n \times n}_{\mathrm{sym}}$ and adapted to the filtration associated with $(W_t)_{t \geq 0}$, with the following properties:
\begin{enumerate}
	\item[(A)] Abbreviate $\pi_t = \pi_{A_t}$. Then there exist a bounded, integer-valued random variable $M \geq 0$ and stopping times $0 = \tau_0 < \tau_1 < \tau_2 < \ldots$
for which the following hold: for any fixed $i \geq 1$ and $t > 0$, if $i \leq M$ and $t \in [\tau_{i-1}, \tau_i)$ then $\pi_t = \pi_{\tau_{i-1}}$ and
	\begin{equation}
		A_t = A_{\tau_{i-1}} + \pi_{t} \left( W_t - W_{\tau_{i-1}} \right). \label{eq_1803}
	\end{equation}
\item[(B)] For $t \geq \tau_M$ we have  $A_t = A_{\tau_M}$ and $\pi_t = 0$. Moreover, $A_0 = a_0 \cdot \id$.
	\item[(C)] Almost surely, for all $t \geq 0$ the matrix $A_t$ is positive-definite and $L$-free.
\item[(D)] Set $\cE_t := \cE_{A_t}$. Then almost surely, $\partial \cE_s \cap L \subseteq \partial \cE_t \cap L$ for all $0 \leq s \leq t$.
\item[(E)] Denote $F_t := F_{A_t}$. Then almost surely,
\begin{equation}  \tau_* := \inf \{ t \geq 0 \, ; \, F_t = \{ 0 \} \}
\label{eq_1507} \end{equation}
	is finite, and $\tau_* = \tau_M$.
\end{enumerate}
\label{prop_1510}
\end{proposition}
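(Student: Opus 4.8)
The plan is to construct the process $(A_t)_{t\ge 0}$ by iterating Lemma \ref{lem_1800} at a sequence of stopping times, and to verify that the iteration terminates in finitely many steps. First I would set $A_0 = a_0 \cdot \id$, which is $L$-free by hypothesis, and $\tau_0 = 0$. Given that we have defined $A_t$ on $[0, \tau_{i-1}]$ with $A_{\tau_{i-1}}$ positive-definite and $L$-free, I would distinguish two cases according to whether $F_{A_{\tau_{i-1}}} = \{0\}$ or not. If $F_{A_{\tau_{i-1}}} = \{0\}$, set $M = i-1$, declare the process frozen by putting $A_t = A_{\tau_{i-1}}$ and $\pi_t = 0$ for all $t \ge \tau_{i-1}$, and stop. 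If $F_{A_{\tau_{i-1}}} \neq \{0\}$, apply Lemma \ref{lem_1800} to the matrix $M_0 := A_{\tau_{i-1}}$ together with the \emph{shifted} Dyson Brownian motion $\widetilde W_t := W_{t + \tau_{i-1}} - W_{\tau_{i-1}}$ (which is again a Dyson Brownian motion, independent of $\cF_{\tau_{i-1}}$, by the strong Markov property), setting
\begin{equation*}
A_{t+\tau_{i-1}} = A_{\tau_{i-1}} + \pi_{A_{\tau_{i-1}}}\!\left( W_{t+\tau_{i-1}} - W_{\tau_{i-1}} \right)
\qquad (0 \le t \le \widetilde\tau),
\end{equation*}
where $\widetilde\tau$ is the (almost surely finite, non-zero) stopping time produced by Lemma \ref{lem_1800}. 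Then define $\tau_i := \tau_{i-1} + \widetilde\tau$. On $[\tau_{i-1}, \tau_i)$ the projection $\pi_{A_t}$ is constant and equal to $\pi_{A_{\tau_{i-1}}}$, because along this stretch $\partial \cE_{A_t} \cap L = \partial \cE_{A_{\tau_{i-1}}} \cap L$ (this is exactly the definition of $\widetilde\tau$), and $F_A$ depends on $A$ only through $\partial\cE_A\cap L$; this gives (A). Conclusion (B) of Lemma \ref{lem_1800} gives that $A_t$ is positive-definite and $L$-free on this stretch, and conclusion (\ref{eq_1740}) gives $\partial\cE_{A_{\tau_{i-1}}}\cap L \subsetneq \partial\cE_{A_{\tau_i}}\cap L$, so the number of contact points strictly increases at each $\tau_i$.

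The key finiteness argument: since each $A_{\tau_i}$ is $L$-free, the bound (\ref{eq_1358}) gives $|\partial \cE_{A_{\tau_i}} \cap L| \le \tilde C_L$, and the strict monotonicity $\partial \cE_{A_{\tau_{i-1}}}\cap L \subsetneq \partial\cE_{A_{\tau_i}}\cap L$ forces the cardinality to increase by at least one at every step; hence the construction must reach the frozen case after at most $\tilde C_L$ iterations. Therefore $M$ is a well-defined, bounded, integer-valued random variable, and $0 = \tau_0 < \tau_1 < \cdots < \tau_M < \infty$ with each $\tau_i$ a stopping time (a finite sum of stopping times, using that each $\widetilde\tau$ is a stopping time relative to the shifted filtration). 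Adaptedness and continuity of $(A_t)_{t\ge 0}$ follow since on each interval $A_t$ is a continuous, $\cF_t$-measurable function of the Brownian path, and the pieces agree at the endpoints $\tau_i$ by construction. This establishes (A), (B), (C) and (D) (the latter by concatenating the inclusions $\partial\cE_s\cap L \subseteq \partial\cE_t\cap L$ valid within each interval and across the freezing points, using that $F_{A_{\tau_M}} = \{0\}$ forces $\partial\cE_t\cap L$ to be constant for $t \ge \tau_M$ since $\pi_t = 0$ there).

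For (E), I would argue that $\tau_* = \tau_M$. By construction $F_{A_{\tau_M}} = \{0\}$, so $\tau_* \le \tau_M$. Conversely, for $t < \tau_M$ we have $t \in [\tau_{i-1}, \tau_i)$ for some $i \le M$, and on this interval $\partial\cE_{A_t}\cap L = \partial\cE_{A_{\tau_{i-1}}}\cap L$; since the construction did not freeze at step $i$, we had $F_{A_{\tau_{i-1}}} \neq \{0\}$, hence $F_t = F_{A_t} = F_{A_{\tau_{i-1}}} \neq \{0\}$. Thus $t < \tau_M$ implies $F_t \neq \{0\}$, giving $\tau_* \ge \tau_M$, and finiteness of $\tau_*$ follows from finiteness of $\tau_M = \sum_{i=1}^M \widetilde\tau_i$.

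I expect the main obstacle to be purely bookkeeping rather than conceptual: carefully justifying that the shifted process $W_{t+\tau_{i-1}} - W_{\tau_{i-1}}$ is again a Dyson Brownian motion independent of $\cF_{\tau_{i-1}}$ (the strong Markov property of Brownian motion in $\RR^{n\times n}_{symm}$), that each $\tau_i$ is a genuine stopping time for the original filtration, and that the concatenated process is adapted and continuous at the splicing times. One should also record that the degenerate event where some $\widetilde\tau$ vanishes or fails to be finite has probability zero, which is already guaranteed by Lemma \ref{lem_1800}, so the ``almost surely'' qualifiers propagate through the at most $\tilde C_L$ steps without issue.
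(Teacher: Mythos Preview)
Your proposal is correct and follows essentially the same approach as the paper: an inductive iteration of Lemma~\ref{lem_1800} using the shifted Brownian motion $W_{t+\tau_{i-1}}-W_{\tau_{i-1}}$, termination via the contact-point bound $\tilde C_L$ from~(\ref{eq_1358}), and then verification of (A)--(E) from the construction. The only cosmetic difference is that the paper, in order to literally produce an \emph{infinite} increasing sequence of stopping times as stated in (A), sets $\tau_{M+j}:=\tau_M+j$ for $j\ge 1$ after freezing; you may want to add this one line.
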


\begin{proof} We will recursively iterate the construction of Lemma \ref{lem_1800}.
Set
$$
	A_0 = a_0 \cdot \id,
$$ 
 and $\tau_0 = 0$. We will inductively construct
stopping times
\begin{equation} 0 = \tau_0 < \tau_1 < \tau_2 < \ldots  \label{eq_1339} \end{equation}
and symmetric matrices $(A_{\tau_i})_{i \geq 1}$  such that almost surely, the random variable $\tau_i$ is finite and the matrix $A_{\tau_i}$ is a positive-definite,  $L$-free matrix for all~$i$. For the base of the induction, we note that the matrix $A_{\tau_0}$ is positive-definite and $L$-free, by assumption.

\medskip Let $i \geq 1$ and suppose that $\tau_{i-1}$ and $A_{\tau_{i-1}}$ have been constructed such that almost surely $\tau_{i-1}$ is finite, and $A_{\tau_{i-1}}$ is positive-definite and $L$-free. Let us construct $\tau_i$ and $A_{\tau_i}$.
 If
\begin{equation}   F_{{\tau_{i-1}}} = \{ 0 \} \label{eq_1707} \end{equation}
then we simply set $A_t := A_{\tau_{i-1}}$ for $t > \tau_{i-1}$. We also define $M := i-1$ and $\tau_{M+j} := \tau_M + j$ for $j \geq 1$, and end the recursive construction. By the induction hypothesis, $A_{\tau_j}$ is a positive-definite, $L$-free matrix for all $j \geq i$.
This completes the description of the recursion step in the case where (\ref{eq_1707}) holds true.
Suppose now that
\begin{equation}  F_{{\tau_{i-1}}} \neq \{ 0 \}.
\label{eq_1329} \end{equation}
Define
\begin{equation} W_t^{(i)} := W_{t + \tau_{i-1}} - W_{\tau_{i-1}} \qquad \qquad (t \geq 0), \label{eq_1453}
\end{equation}
which is a standard Brownian motion in $\RR^{n \times n}_{\mathrm{sym}}$, or in other words, a Dyson Brownian motion.
Set \begin{equation} M_0 = A_{\tau_{i-1}}, \label{eq_1830} \end{equation}
which is positive-definite and $L$-free by the induction hypothesis.
We know that $F_{M_0} \neq \{ 0 \}$, thanks to (\ref{eq_1329}). Denote
\begin{equation} M_t = M_0 + \pi_{M_0}(W_t^{(i)}), \label{eq_1829} \end{equation}
and apply Lemma \ref{lem_1800}. From the conclusion of the lemma, almost surely the stopping time
\begin{equation}  \tau_i := \tau_{i-1} + \sup \{ t \geq 0 \, ; \, M_s \ \textrm{is} \ L\textrm{-free with } \partial \cE_{M_s} \cap L = \partial \cE_{M_0} \cap L \textrm{ for all } s \in [0,t] \}, \label{eq_1436} \end{equation}
is finite with $\tau_i > \tau_{i-1}$. Moreover, almost surely $\cE_{M_t}$ is an $L$-free ellipsoid for $0 \leq t \leq \tau_i - \tau_{i-1}$.  Therefore, setting
\begin{equation}  A_{t} := M_{t - \tau_{i-1}} \qquad \qquad \qquad \text{for} \ t \in (\tau_{i-1}, \tau_i] \label{eq_1338} \end{equation}
we see that $A_t$ is positive-definite and $L$-free for $t \in [\tau_{i-1}, \tau_i]$. Almost surely, the matrix $A_t$ depends continuously on $t \in [\tau_{i-1}, \tau_i]$.
Furthermore, from conclusion (\ref{eq_1740}) of Lemma \ref{lem_1800} we learn that almost surely,
\begin{equation} \partial \cE_{A_{\tau_{i-1}}} \cap L \subsetneq \partial \cE_{A_{\tau_{i}}} \cap L.
\label{eq_1333} \end{equation}
This completes the description of the recursive construction of $\tau_i$ and $A_{\tau_i}$ for all $i \geq 0$. It follows from (\ref{eq_1339}) that along the way we defined the random matrix $A_t$ for all $0 < t \leq \tau_M$, via formula (\ref{eq_1338}). For completeness, set
\begin{equation} A_t = A_{\tau_M} \qquad  \textrm{for}  \ t > \tau_M.
\label{eq_1227_} \end{equation}	
	 Thus, almost surely the stochastic process $(A_t)_{t \geq 0}$ is well-defined and continuous. Let us discuss the basic properties of this construction.

\medskip We first claim that the random variable $M$, which is the number of steps in the construction, is a bounded random variable. Indeed,
relation (\ref{eq_1333}) holds true for all $i=1,\ldots,M$. Therefore,
\begin{equation}  |\partial \cE_{A_{\tau_{i}}} \cap L| \geq i \qquad \qquad \qquad (i=1,\ldots,M), \label{eq_1337} \end{equation}
while the ellipsoid $\cE_{A_{\tau_{i}}}$ is $L$-free. It follows from (\ref{eq_1358}) and (\ref{eq_1337}) that  almost surely $M \leq \tilde{C}_L$, and hence $M$ is a bounded random variable.
We conclude that the random variable $\tau_M$ is almost surely finite, being almost surely the sum of finitely many numbers.

\medskip Next, by the construction of $A_t$ in (\ref{eq_1338}), the matrix $A_t$ is almost surely positive-definite and $L$-free for $t \in [\tau_{i-1}, \tau_i]$ for all $i=1,\ldots,M$.
It thus follows from  (\ref{eq_1339}) that $A_t$ is positive-definite and $L$-free for $t \in [0, \tau_M]$. Observe that by (\ref{eq_1436}) and (\ref{eq_1338}),
for any $i=1,\ldots,M$ and $t \in [\tau_{i-1}, \tau_{i})$,
\begin{equation}  \partial \cE_{A_t} \cap L = \partial \cE_{A_{\tau_{i-1}}} \cap L. \label{eq_1457} \end{equation}
 From  (\ref{eq_1453}), (\ref{eq_1830}), (\ref{eq_1829}), (\ref{eq_1338}) and (\ref{eq_1457}), for  $i=1,\ldots,M$ and $t \in [\tau_{i-1}, \tau_{i})$
 we have $F_{t} = F_{{\tau_{i-1}}}$ and
$$ A_t = A_{\tau_{i-1}} + \pi_{A_{\tau_{i-1}}}(W_{t - \tau_{i-1}}^{(i)})
= A_{\tau_{i-1}} + \pi_{A_t}(W_t - W_{\tau_{i-1}}).
$$
Since $A_t$ depends continuously on $t$, and since $A_t$ is constant for $t \in [\tau_M, \infty)$ by (\ref{eq_1227_}), conclusion (A) and conclusion (B) are proven.
Note that the matrix $A_t$ is determined by $a_0, L$ and $(W_s)_{0 \leq s \leq t}$. In particular,
the stochastic process $(A_t)_{t \geq 0}$ is adapted to the filtration associated with the Dyson Brownian motion.

\medskip Conclusion (C) holds true as $A_t$ is positive-definite and $L$-free for $t \in [0, \tau_M]$, and $A_t = A_{\tau_M}$  for $t \in [\tau_M, \infty)$.
Conclusion (D) holds true in view of (\ref{eq_1333})  and (\ref{eq_1457}).

\medskip From our construction, if $\PP( M = 0) > 0$ then $F_{A_0} = \{ 0 \}$. In this case, actually $M \equiv 0$ almost surely and conclusion~(E)
trivially holds  with $\tau_* = \tau_M = 0$, where $\tau_*$ is defined in (\ref{eq_1507}). Otherwise, $M \geq 1$ almost surely. In this case, the subspace $F_t = F_{A_t}$ is constant and different from $\{ 0 \}$ for $t \in [\tau_{i-1}, \tau_i)$ and $i=1,\ldots,M$.
We always have $F_{\tau_M} = \{ 0 \}$. It thus follows that $\tau_* = \tau_M$. Thus the stopping time $\tau_*$ is almost surely finite, completing the proof of (E).
\end{proof}

We refer to the stochastic process $(\cE_{t})_{t \geq 0}$ from Proposition \ref{prop_1510} as the {\it stochastically evolving ellipsoid}.
The volume of the $L$-free ellipsoid $\cE_t$ may increase or decrease with $t$, but it remains bounded at all times. In fact, it  follows from (\ref{eq_1358}), (\ref{eq_1128}) and Proposition \ref{prop_1510}(C)
that almost surely,
\begin{equation}  \det A_t \geq c_L  \qquad \qquad \qquad \text{for all} \ t \geq 0, \label{eq_1238} \end{equation}
with $c_L = (C_L / \Vol_n(B^n))^{-2}$. The It\^o integral interpretation of conclusions (A) and (B) of
Proposition \ref{prop_1510} is given in the following:

\begin{corollary} Under the notation and assumptions of Proposition \ref{prop_1510},
for all $t \geq 0$,
\begin{equation}  A_t = a_0 \cdot \id  +  \int_0^t  \pi_s (d W_s). \label{eq_1701_} \end{equation}
Thus $A_0 = a_0 \cdot \id$ and we have the stochastic differential equation
\begin{equation}  d A_t =  \pi_t(dW_t).
	\label{eq_1701} \end{equation}	
\label{cor_1852}
\end{corollary}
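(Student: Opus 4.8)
The plan is to recognize the operator-valued process $(\pi_t)_{t \ge 0}$ as an \emph{elementary} (piecewise constant) integrand, and to observe that the recursion in Proposition~\ref{prop_1510} produces $A_t$ as precisely the telescoping sum that defines the It\^o integral of such a process. Working in an orthonormal basis of $\RR^{n \times n}_{symm}$, the object $\int_0^t \pi_s(dW_s)$ is a finite-dimensional, vector-valued It\^o integral; since each $\pi_s$ is an orthogonal projection its operator norm is at most $1$, so the integrand is bounded and adapted (adaptedness being essentially the content of Proposition~\ref{prop_1510}(A),(B)), and in particular the integral is a genuine $L^2$ martingale with no need for localization.

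First I would record, using Proposition~\ref{prop_1510}(A),(B), that $t \mapsto \pi_t$ is constant on each stochastic interval $[\tau_{i-1}, \tau_i)$ for $i = 1,\ldots,M$, equal there to the $\cF_{\tau_{i-1}}$-measurable projection $\pi_{\tau_{i-1}}$, and equal to $0$ for $t \ge \tau_M$; moreover $M \le \tilde{C}_L$ is bounded and each $\tau_i$ is a.s.\ finite, so for every fixed $t$ only boundedly many of these intervals meet $[0,t]$. Next I would invoke the elementary localization property of the stochastic integral: for stopping times $\sigma \le \rho$ and a bounded $\cF_\sigma$-measurable operator $H$ one has $\int_0^t \mathbf 1_{(\sigma, \rho]}(s)\, H(dW_s) = H\!\left( W_{t \wedge \rho} - W_{t \wedge \sigma} \right)$. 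Applying this with $\sigma = \tau_{i-1}$, $\rho = \tau_i$, $H = \pi_{\tau_{i-1}}$, summing over $i = 1,\ldots,M$, and using that $\pi_s = 0$ beyond $\tau_M$, gives
$$ \int_0^t \pi_s(dW_s) = \sum_{i=1}^{M} \pi_{\tau_{i-1}}\!\left( W_{t \wedge \tau_i} - W_{t \wedge \tau_{i-1}} \right). $$

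For the other side, I would use (\ref{eq_1803}) together with the continuity of $t \mapsto A_t$ and the identity $A_t = A_{\tau_M}$ for $t \ge \tau_M$ to check that, for every $i = 1,\ldots,M$ and every $t \ge 0$,
$$ A_{t \wedge \tau_i} - A_{t \wedge \tau_{i-1}} = \pi_{\tau_{i-1}}\!\left( W_{t \wedge \tau_i} - W_{t \wedge \tau_{i-1}} \right); $$
indeed both sides vanish when $t \le \tau_{i-1}$, they agree by (\ref{eq_1803}) when $\tau_{i-1} \le t \le \tau_i$ (using continuity at the right endpoint $\tau_i$), and for $t \ge \tau_i$ both sides equal the full increment of the quantity over that interval. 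Telescoping the left-hand side over $i=1,\ldots,M$ yields $A_{t \wedge \tau_M} - A_{\tau_0} = A_t - a_0 \cdot \id$, since $\tau_0 = 0$, $A_0 = a_0 \cdot \id$, and $A_t = A_{t \wedge \tau_M}$. Comparing with the displayed formula for the stochastic integral gives (\ref{eq_1701_}), and (\ref{eq_1701}) is merely its differential restatement.

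The only non-elementary ingredient is the identity $\int_0^t \mathbf 1_{(\sigma,\rho]}(s)\, H(dW_s) = H(W_{t\wedge\rho} - W_{t\wedge\sigma})$ for stochastic intervals, which is a standard fact of It\^o calculus (see {\O}ksendal \cite{O} or Revuz and Yor \cite{RY}); everything else is bookkeeping, the main such chore being to confirm that $(\pi_s)$ is a legitimate integrand, i.e.\ progressively measurable and bounded. A small point worth noting is that the value of $\pi_s$ at the (countably many, hence Lebesgue-null in time) jump instants $s = \tau_i$ is irrelevant to the integral, so no care is needed about right- versus left-continuity of the step process $(\pi_s)_{s\ge0}$.
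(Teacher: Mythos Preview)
Your proof is correct and follows essentially the same approach as the paper, which simply recalls the Riemann-sum definition of the It\^o integral over deterministic partitions and then asserts that (\ref{eq_1701_}) follows from conclusions (A) and (B) of Proposition~\ref{prop_1510} ``via a standard argument.'' Your version is a careful fleshing-out of that standard argument, opting to compute the integral directly via the identity $\int_0^t \mathbf 1_{(\sigma,\rho]}(s)\,H(dW_s) = H(W_{t\wedge\rho} - W_{t\wedge\sigma})$ over the stochastic intervals $(\tau_{i-1},\tau_i]$ rather than passing through deterministic partitions.
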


\begin{proof} The It\^o integral on the right-hand side of (\ref{eq_1701_}) may be defined as
\begin{equation}  \int_0^t  \pi_s (d W_s) = \lim_{\eps(P) \rightarrow 0} \sum_{i=1}^{N_P} \pi_{t_{i-1}} \left( W_{t_i} - W_{t_{i-1}} \right), \label{eq_1801}
\end{equation}
where $P = \{ 0 = t_0 < t_1 < \ldots < t_{N_P} = t \}$ is a non-random partition of $[0,t]$ into $N_P$ intervals
and $\eps(P) = \max_{1 \leq i \leq N_P} |t_{i} - t_{i-1}|$. For a fixed $t > 0$, the convergence
of the $\RR^{n \times n}_{\mathrm{sym}}$-valued random variables in (\ref{eq_1801}) is in the sense of $L^2$.
See e.g. \cite[Chapter 3]{O} for explanations on the construction of the It\^o integral using $L^2$-convergence,
as well as for the fact that by modifying the collection of random variables $$ \left( \int_0^t  \pi_s (d W_s) \right)_{t \geq 0} $$
on a null set, we may assume that the map $t \to \int_0^t  \pi_s (d W_s)$ is almost-surely continuous in $[0, \infty)$.
In order to prove (\ref{eq_1701_}), we use conclusions (A) and (B) of Proposition \ref{prop_1510}.
By these conclusions, if $Q = \{ 0 = s_0 < s_1 < \ldots < s_{N_Q} = t \}$
is a {\it random} partition of $[0,t]$ into $N_Q$ intervals such that $\tau_i \in Q$ for all $i=1,\ldots,M$ with $\tau_i \leq t$, then,
\begin{equation}  A_t = a_0 \cdot \id + \sum_{i=1}^{N_Q} \pi_{s_{i-1}} \left( W_{s_i} - W_{s_{i-1}} \right). \label{eq_1136} \end{equation}
Now (\ref{eq_1701_}) follows from (\ref{eq_1801}) and (\ref{eq_1136}) by approximating $P$ with a refined partition that
contains all of the $\tau_i$'s that lie in the interval $[0,t]$. Indeed, since $M$ is a bounded random variable, the approximation error can be made arbitrarily small in~$L^2$.
Finally, equation (\ref{eq_1701}) is simply the stochastic differential equation rewriting of~(\ref{eq_1701_}).
\end{proof}

\section{The shape and volume of the evolving ellipsoid}
\label{sec3}

 Let $L \subset \RR^n$ be a lattice. Assume that $a_0 > 0$ is such that the matrix
$$ a_0 \cdot \id \in \RR^{n \times n} $$
is $L$-free. Let $(A_t)_{t \geq 0}$ be the stochastic process constructed in Proposition \ref{prop_1510}, driven
by the Dyson Brownian motion $(W_t)_{t \geq 0}$ in $\RR^{n \times n}_{\mathrm{sym}}$.

\begin{lemma} There exist two Dyson Brownian motions $(W_t^{(1)})_{t \geq 0}$ and $(W_t^{(2)})_{t \geq 0}$ in $\RR^{n \times n}_{\mathrm{sym}}$ such that for all $t \geq 0$,
\begin{equation}  A_t = a_0 \cdot \id + \frac{W_t^{(1)} + W_t^{(2)}}{2}.
	\label{eq_1046} \end{equation}
	\label{lem_1033}
\end{lemma}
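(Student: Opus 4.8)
The starting point is the It\^o integral representation from Corollary \ref{cor_1852}:
$$ A_t = a_0 \cdot \id + \int_0^t \pi_s(dW_s) \qquad (t \geq 0), $$
where the integrand $(\pi_s)_{s \geq 0}$, $\pi_s = \pi_{A_s}$, is the \emph{orthogonal projection} of $\RR^{n \times n}_{symm}$ onto the subspace $F_s$, and where, by Proposition \ref{prop_1510}, $(\pi_s)$ is adapted and piecewise constant in $s$ (hence a bona fide bounded It\^o integrand). The essential point is that $\pi_s$ is a projection rather than an isometry, so $A_t - a_0 \cdot \id$ is a degenerate, Brownian-type motion rather than a genuine Brownian motion. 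The plan is to ``symmetrize away'' the projection by a reflection. Writing $\id_*$ for the identity operator on $\RR^{n \times n}_{symm}$, I would set
$$ W_t^{(1)} := W_t \qquad \text{and} \qquad W_t^{(2)} := \int_0^t \bigl( 2 \pi_s - \id_* \bigr)(dW_s) = 2 \int_0^t \pi_s(dW_s) - W_t . $$
Then $(W_t^{(1)} + W_t^{(2)})/2 = \int_0^t \pi_s(dW_s) = A_t - a_0 \cdot \id$, which is exactly (\ref{eq_1046}); moreover $W^{(1)}$ is a Dyson Brownian motion since it equals the given one, and both processes are adapted to the filtration of $(W_t)_{t \geq 0}$. (The two Brownian motions produced this way are strongly correlated, but the lemma only asserts their existence.)

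The one substantive point is to verify that $W^{(2)}$ is again a Dyson Brownian motion, i.e.\ a standard Brownian motion in the Euclidean space $\RR^{n \times n}_{symm}$. The key algebraic observation is that $R_s := 2 \pi_s - \id_*$ is the reflection of $\RR^{n \times n}_{symm}$ fixing $F_s$ and negating its orthogonal complement; in particular, since $\pi_s = \pi_s^*$ and $\pi_s^2 = \pi_s$, we have $R_s^* = R_s$ and $R_s^2 = 4 \pi_s^2 - 4 \pi_s + \id_* = \id_*$, so $R_s$ is an orthogonal transformation of $\RR^{n \times n}_{symm}$ for every $s$, and $(R_s)_{s \geq 0}$ is bounded, adapted and piecewise constant. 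Hence $W_t^{(2)} = \int_0^t R_s(dW_s)$ is a continuous square-integrable martingale with $W_0^{(2)} = 0$ whose quadratic covariation coincides with that of $W$: for any linear functionals $f, g$ on $\RR^{n \times n}_{symm}$, represented via the inner product by $f^*, g^* \in \RR^{n \times n}_{symm}$,
$$ \bigl\langle f(W^{(2)}), g(W^{(2)}) \bigr\rangle_t = \int_0^t \langle R_s f^*, R_s g^* \rangle \, ds = \int_0^t \langle f^*, R_s^2 g^* \rangle \, ds = t \cdot \langle f^*, g^* \rangle . $$
By L\'evy's characterization of Brownian motion in a finite-dimensional inner product space, $W^{(2)}$ is then a standard Brownian motion in $\RR^{n \times n}_{symm}$, i.e.\ a Dyson Brownian motion, which finishes the argument.

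I do not expect a genuine obstacle here; the lemma reduces to a three-line computation, and the only care needed is organizational. One should (i) make sure the It\^o representation of Corollary \ref{cor_1852} is genuinely in force and that $(\pi_s)$ qualifies as an admissible integrand, which is guaranteed by Proposition \ref{prop_1510} since $\pi_s$ is adapted and piecewise constant, hence progressively measurable and bounded; (ii) get the involution identity $R_s^2 = \id_*$ right, which is precisely where orthogonality of the projection $\pi_s$ enters; and (iii) keep in mind that in this paper ``Dyson Brownian motion'' simply means standard Brownian motion in the inner-product space $\RR^{n \times n}_{symm}$, so that L\'evy's theorem is applied with the inner-product-space notion of quadratic variation, exactly as in the definition of standard Brownian motion recalled in Section \ref{sec2}.
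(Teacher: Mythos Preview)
Your proof is correct and is essentially the same as the paper's: the paper writes $\tilde{\pi}_t = \id - \pi_t$ and sets $W_t^{(1)} = \int_0^t (\pi_s + \tilde{\pi}_s)\,dW_s$ and $W_t^{(2)} = \int_0^t (\pi_s - \tilde{\pi}_s)\,dW_s$, which are precisely your $W_t$ and $\int_0^t (2\pi_s - \id_*)\,dW_s$. The only cosmetic difference is that you explicitly identify $R_s = 2\pi_s - \id_*$ as a reflection to see $R_s^2 = \id_*$, whereas the paper simply computes $(\pi_s - \tilde{\pi}_s)^2 = \id$ directly before invoking L\'evy's characterization.
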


\begin{proof} We use an idea that is attributed to
	Bernard Maurey, see Eldan and Lehec \cite[Proposition 4]{EL}.	
	Recall from Proposition \ref{prop_1510}  the linear map $$ \pi_t: \RR^{n \times n}_{\mathrm{sym}} \to \RR^{n \times n}_{\mathrm{sym}}, $$
which is the orthogonal projection operator onto the subspace of all matrices $B \in \RR^{n \times n}_{\mathrm{sym}}$ satisfying $B x \cdot x = 0$ for all $x \in \partial \cE_{t} \cap L$.
Almost surely, for all $t \geq 0$ the linear map $\pi_t$  is a symmetric operator  and
$$ 0 \leq \pi_t \leq \id, $$
in the sense of symmetric operators on the Euclidean space $\RR^{n \times n}_{\mathrm{sym}}$. It follows from Corollary \ref{cor_1852} that $(A_t)_{t \geq 0}$ is a martingale in $\RR^{n \times n}_{\mathrm{sym}}$. Its quadratic variation process is
\begin{equation}
[A]_t = \int_0^t \pi_s^2 ds = \int_0^t \pi_s ds. \label{eq_1023}
\qquad \qquad \qquad (t > 0).
\end{equation}
Denote $\tilde{\pi}_t = \id - \pi_t: \RR^{n \times n}_{\mathrm{sym}} \to \RR^{n \times n}_{\mathrm{sym}}$ and set
$$ W_t^{(1)} = \int_0^t \pi_s(d W_s) + \int_0^t \tilde{\pi}_s(d W_s) $$
	and
$$  W_t^{(2)} = \int_0^t \pi_s(d W_s)  - \int_0^t \tilde{\pi}_s(d W_s). $$
Thus $(W_t^{(1)})_{t \geq 0}$ and $(W_t^{(2)})_{t \geq 0}$ are well-defined, continuous martingales in $\RR^{n \times n}_{\mathrm{sym}}$, with
$$W_t^{(1)}  + W_t^{(2)}  = 2 \int_0^t \pi_s(d W_s) = 2 (A_t - a_0 \cdot \id), $$
where the last passage follows from  Corollary \ref{cor_1852}. This proves
the desired conclusion (\ref{eq_1046}). Recall that Paul L\'evy characterized standard Brownian
motion as the unique continuous martingale starting from zero whose quadratic variation process
is almost surely $t \cdot \id$ for all $t > 0$. See e.g. \cite[Theorem 5.12]{legall}.
The quadratic variation processes of our two martingales satisfy, for $t > 0$,
$$ [W^{(1)}]_t = \int_0^t (\pi_s + \tilde{\pi}_s)^2 ds = t \cdot \id $$
and
$$ [W^{(2)}]_t = \int_0^t (\pi_s - \tilde{\pi}_s)^2 ds = t \cdot \id. $$
Note that $ W_0^{(1)} = W_0^{(2)} = 0$. Thus, by L\'evy's characterization,
both $(W_t^{(1)})_{t \geq 0}$ and $(W_t^{(2)})_{t \geq 0}$ are standard Brownian motions in $\RR^{n \times n}_{\mathrm{sym}}$. In other words, both stochastic processes are Dyson Brownian motions. \end{proof}

Write $\| A \|_{op} = \sup_{0 \neq x \in \RR^n} |Ax| / |x|$ for the operator norm of the matrix $A \in \RR^{n \times n}$.

\begin{corollary} For any $t > 0$ and $r \geq \sqrt{tn}$,
	$$ \PP  \left( \| A_t - a_0 \cdot \id \|_{op} \geq C_0 r \right) \leq C \exp(- r^2 / t), $$
	where $C, C_0 > 0$ are universal constants. \label{cor_1833}
\end{corollary}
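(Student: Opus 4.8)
The plan is to use the decomposition from Lemma \ref{lem_1033}, which writes $A_t - a_0 \cdot \id = (W_t^{(1)} + W_t^{(2)})/2$ for two Dyson Brownian motions $(W_t^{(i)})_{t \ge 0}$ in $\RR^{n \times n}_{symm}$. By the triangle inequality for the operator norm, it suffices to bound $\PP(\|W_t^{(i)}\|_{op} \ge C_0' r)$ for each $i$, since then $\|A_t - a_0 \cdot \id\|_{op} \le (\|W_t^{(1)}\|_{op} + \|W_t^{(2)}\|_{op})/2$. So I have reduced the problem to a tail bound on the operator norm of a single Gaussian symmetric random matrix: for each fixed $t > 0$, the matrix $W := W_t^{(i)}$ is distributed as $\sqrt{t}$ times a GOE-type matrix (a symmetric matrix whose entries are jointly Gaussian with the variance structure dictated by the inner product $\langle A, B \rangle = \Tr[AB]$ on $\RR^{n \times n}_{symm}$).

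Next I would establish the required tail estimate $\PP(\|W\|_{op} \ge C_0' r) \le C \exp(-r^2/t)$ for $r \ge \sqrt{tn}$. There are two standard routes. The first is concentration of measure: the map $A \mapsto \|A\|_{op}$ is $1$-Lipschitz on $\RR^{n\times n}_{symm}$ with respect to the Hilbert--Schmidt norm $\|\cdot\|_{HS} = \sqrt{\langle \cdot,\cdot\rangle}$, so by the Gaussian concentration inequality, $\PP(\|W\|_{op} \ge \EE\|W\|_{op} + s) \le \exp(-s^2/(2t))$ (the Gaussian vector $W$ has coordinates of variance $O(t)$ in an orthonormal basis, and one checks the variance proxy works out to $t$). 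Combined with the classical bound $\EE\|W\|_{op} \le C\sqrt{tn}$ on the expected operator norm of a GOE-type matrix, choosing $s$ comparable to $r$ (legitimate because $r \ge \sqrt{tn} \gtrsim \EE\|W\|_{op}/C$) gives the claim after adjusting constants. The second route is an $\eps$-net argument: cover the unit sphere $S^{n-1}$ by a net of size $e^{Cn}$, bound $\PP(Wx\cdot x \ge r)$ for each fixed $x$ by a scalar Gaussian tail $\exp(-r^2/(2t))$ (since $Wx \cdot x$ is centered Gaussian with variance $\le t$), union bound over the net, and absorb the $e^{Cn}$ factor into $\exp(-r^2/t)$ using $r^2 \ge tn$. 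Either way works; I would likely present the $\eps$-net version as it is the most self-contained.

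Finally I would reassemble: combining the triangle-inequality reduction with the single-matrix tail bound applied to $r/2$ (and renaming constants), we get $\PP(\|A_t - a_0\cdot\id\|_{op} \ge C_0 r) \le 2C\exp(-c r^2/t) \le C'\exp(-r^2/t)$ after once more adjusting the universal constants, valid for $r \ge \sqrt{tn}$.

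The main obstacle is purely bookkeeping: making sure the variance normalization is right. One must verify carefully that with the inner product $\Tr[AB]$, the off-diagonal entries of $W_t$ have variance $t/2$ and the diagonal entries have variance $t$ (or whatever the precise values are), so that the scalar bilinear form $W_t x \cdot x$ for a unit vector $x$ has variance exactly $t$, and that $\|\cdot\|_{op}$ really is $1$-Lipschitz in the induced Hilbert--Schmidt metric. None of this is deep, but the constants $C_0$ and the threshold $r \ge \sqrt{tn}$ have to be tracked so the $e^{Cn}$ (or the $\EE\|W\|_{op} \sim \sqrt{tn}$ term) is genuinely dominated. I expect the write-up to be short once the normalization is pinned down.
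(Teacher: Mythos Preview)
Your proposal is correct and follows essentially the same route as the paper: apply Lemma~\ref{lem_1033} to write $A_t - a_0\cdot\id = (W_t^{(1)}+W_t^{(2)})/2$, use the triangle inequality to reduce to a tail bound for a single Dyson Brownian motion at time $t$, identify $W_t$ with $\sqrt{tn}$ times a GOE matrix, and invoke the standard $\eps$-net bound (the paper cites Vershynin) to conclude. The paper presents only the $\eps$-net route and does not spell out the variance bookkeeping you flagged, but otherwise the arguments coincide.
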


\begin{proof} From Lemma \ref{lem_1033}, for any $r > 0$,
\begin{align*}  \PP \left( \| A_t - a_0 \cdot \id \|_{op}  \geq r \right)
 & = \PP \left( \left \| \frac{W_t^{(1)} + W_t^{(2)}}{2}  \right \|_{op} \geq r \right)
 \\ & \leq
  2 \PP \left( \| W_t \|_{op} \geq r \right)
  = 2 \PP \left( \sqrt{\frac{t n}{2}} \| \Gamma \|_{op} \geq r \right), \end{align*}
 where $\Gamma$ is a Gaussian Orthogonal Ensemble (GOE) random matrix. This means that  $\Gamma = (\Gamma_{ij})_{i,j=1,\ldots,n} \in \RR^{n \times n}_{\mathrm{sym}}$ is a random symmetric matrix such that $(\Gamma_{ij})_{i \leq j}$ are independent, centered Gaussian random variables, with $\EE \Gamma_{ij}^2 = (1 + \delta_{ij})/n$.
 It is well-known and proven by an epsilon-net argument (e.g., Vershynin \cite[Corollary 4.4.8]{vershynin}) that for $s \geq 1$,
 $$ \PP \left( \| \Gamma \|_{op} \geq C s \right) \leq 4 \exp(-s^2 n) $$
for a universal constant $C > 0$. The corollary is proven by setting $s = r / \sqrt{tn}$.
\end{proof}

Corollary \ref{cor_1833} implies that if $t < c/n$ and $a_0 \geq 1/2$, then the ellipsoid $\cE_t$ is typically
sandwiched between two concentric Euclidean balls whose radii $r_1 < r_2$ satisfy $r_2 / r_1 \leq C$.
Our next goal is to study the volume growth of the ellipsoid $\cE_t$, or equivalently, the decay of the determinant of the positive-definite matrix $A_t$. To this end we consider the non-negative, integer-valued random variable
\begin{equation}  N_t = \dim(F_t) \qquad \qquad \qquad (t \geq 0) \label{eq_2227_} \end{equation}
where $F_t = F_{A_t}$ is defined in (\ref{eq_1730}) and in Proposition \ref{prop_1510}.

\begin{lemma} For any fixed $T > 0$,
$$ \EE \log \det A_T \leq n \log a_0 -\frac{1}{2} \int_0^T \EE \left[ \| A_t \|_{op}^{-2} \cdot N_t \right] dt. $$
\label{lem_1239}
\end{lemma}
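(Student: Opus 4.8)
The plan is to apply It\^o's formula to the function $A \mapsto \log \det A$ along the martingale $(A_t)_{t \geq 0}$ from Corollary~\ref{cor_1852}, and then identify the trace term arising from the second-order part of It\^o's formula with the expression involving $N_t$. Recall that for a positive-definite matrix $A$, the gradient of $\log \det A$ is $A^{-1}$, and the Hessian, as a quadratic form on $\RR^{n \times n}_{symm}$, acts by $H \mapsto -\Tr[A^{-1} H A^{-1} H]$. Since $d A_t = \pi_t(dW_t)$ with quadratic variation $d[A]_t = \pi_t \, dt$ (as computed in~(\ref{eq_1023})), It\^o's formula gives
\begin{equation}
	d \log \det A_t = \langle A_t^{-1}, \pi_t(dW_t) \rangle - \frac{1}{2} \left( \sum_{k} \Tr\left[ A_t^{-1} E_k A_t^{-1} E_k \right] \right) dt, \label{eq_proposal_ito}
\end{equation}
where $(E_k)$ is an orthonormal basis of the range $F_t$ of $\pi_t$ (the ambient Hessian trace is computed against the quadratic variation operator $\pi_t$, so only directions in $F_t$ contribute). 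The first term on the right-hand side of~(\ref{eq_proposal_ito}) is a local martingale with zero expectation (after a routine localization/integrability argument using~(\ref{eq_1238}), i.e.\ $\det A_t \geq c_L$, together with the operator-norm tail bound of Corollary~\ref{cor_1833} to control $\|A_t^{-1}\|_{op}$; these give enough integrability to take expectations).

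The main point is then to bound the second-order term from below. Writing $B_k = A_t^{-1/2} E_k A_t^{-1/2}$, we have $\Tr[A_t^{-1} E_k A_t^{-1} E_k] = \Tr[B_k^2] = \|B_k\|_{HS}^2 \geq \|B_k\|_{op}^2 \cdot 0$... more usefully, I want an \emph{upper} bound on $\Tr[A_t^{-1} E_k A_t^{-1} E_k]$ to get a \emph{lower} bound on the subtracted term — wait, I need a lower bound on $\sum_k \Tr[A_t^{-1} E_k A_t^{-1} E_k]$. The clean estimate is
\begin{equation}
	\Tr\left[ A_t^{-1} E_k A_t^{-1} E_k \right] \geq \| A_t^{-1} \|_{op}^{-2} \cdot \Tr[E_k^2] \geq \| A_t \|_{op}^{-2}, \label{eq_proposal_bd}
\end{equation}
since $A_t^{-1} \geq \|A_t^{-1}\|_{op}^{-1} \id = \|A_t\|_{op}^{-1} \id$ (using $\|A_t^{-1}\|_{op} = 1/\lambda_{\min}(A_t)$ and, more simply, $A_t^{-1} \geq \|A_t\|_{op}^{-1}\id$ because all eigenvalues of $A_t^{-1}$ are at least $1/\|A_t\|_{op}$), and $\Tr[E_k^2] = \langle E_k, E_k\rangle = 1$. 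Here I use that $\Tr[A^{-1}HA^{-1}H] = \Tr[(A^{-1/2}HA^{-1/2})^2] \geq \lambda_{\min}(A^{-1})^2 \|H\|_{HS}^2$ for symmetric $H$, which follows by diagonalizing. Summing~(\ref{eq_proposal_bd}) over $k = 1, \ldots, \dim F_t = N_t$ yields
\begin{equation}
	\sum_{k} \Tr\left[ A_t^{-1} E_k A_t^{-1} E_k \right] \geq \| A_t \|_{op}^{-2} \cdot N_t. \label{eq_proposal_sum}
\end{equation}

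Integrating~(\ref{eq_proposal_ito}) from $0$ to $T$, taking expectations, using that the stochastic integral term has zero expectation, noting $\log \det A_0 = \log \det(a_0 \id) = n \log a_0$, and inserting~(\ref{eq_proposal_sum}), gives exactly
$$ \EE \log \det A_T = n \log a_0 - \frac{1}{2} \int_0^T \EE\left[ \sum_k \Tr[A_t^{-1} E_k A_t^{-1} E_k] \right] dt \leq n \log a_0 - \frac{1}{2} \int_0^T \EE\left[ \|A_t\|_{op}^{-2} N_t \right] dt, $$
as claimed. The step I expect to require the most care is the justification that the local martingale part has zero expectation and that Fubini applies: one should argue that $\|A_t^{-1}\|_{op}$ has finite moments uniformly on $[0,T]$ — the lower bound $\det A_t \geq c_L$ alone does not bound $\lambda_{\min}(A_t)$ without an upper bound on $\|A_t\|_{op}$, so one genuinely needs the Gaussian tail estimate from Corollary~\ref{cor_1833} (valid for the martingale pieces via Lemma~\ref{lem_1033}, or directly for $A_t$), and then a standard localization by stopping times $\sigma_R = \inf\{t : \|A_t\|_{op} + \|A_t^{-1}\|_{op} \geq R\}$ followed by dominated convergence. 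Everything else is the routine It\^o computation and the elementary matrix inequality~(\ref{eq_proposal_bd}).
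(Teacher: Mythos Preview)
Your proposal is correct and follows essentially the same route as the paper: apply It\^o's formula to $\log\det A_t$, bound the It\^o correction term below by $\|A_t\|_{op}^{-2} N_t$ via $\lambda_{\min}(A_t^{-1}) = \|A_t\|_{op}^{-1}$, and justify that the stochastic integral is a true martingale using $\det A_t \geq c_L$ together with the tail bound of Corollary~\ref{cor_1833}. One small slip: in your displayed inequality the intermediate constant $\|A_t^{-1}\|_{op}^{-2}$ should be $\|A_t\|_{op}^{-2}$, as your own parenthetical explanation (using $\lambda_{\min}(A^{-1})^2$) already makes clear.
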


\begin{proof} For two fixed matrices $P, B \in \RR^{n \times n}_{\mathrm{sym}}$ with $P$ being positive-definite,
we have the Taylor expansion  as $\eps \to 0$,
$$ \log \det(P + \eps B) = \log \det P + \eps \Tr[P^{-1} B] - \frac{\eps^2}{2} \Tr[ (P^{-1} B)^2 ]+ O(\eps^3). $$
Denote the eigenvalues of $P$, repeated according to their multiplicity, by $\lambda_1,\ldots, \lambda_n \in (0, \infty)$.
Then for any orthonormal basis of eigenvectors $u_1,\ldots,u_n \in \RR^n$ corresponding to these eigenvalues,
$$ \left. \frac{d^2}{d \eps^2} \log \det(P + \eps B) \right|_{\eps = 0} = -\sum_{i,j=1}^n \frac{ (B u_i \cdot u_j)^2}{\lambda_i \lambda_j}
= -\sum_{i,j=1}^n \frac{ \langle B,  u_i \otimes_s  u_j\rangle^2}{\lambda_i \lambda_j}, $$
where $x \otimes_s y = (x \otimes y + y \otimes x)/2$
for $x,y \in \RR^n$. Observe that $(u_i \otimes_s u_j)_{i \leq j}$ is an orthogonal basis
of $\RR^{n \times n}_{\mathrm{sym}}$ with $| u_i \otimes_s u_j |^2 = (1 + \delta_{ij})/2$. Therefore,  for any linear map $S: \RR^{n \times n}_{\mathrm{sym}} \rightarrow \RR^{n \times n}_{\mathrm{sym}}$,
$$ \Tr[S] = \sum_{i,j=1}^n \left \langle S ( u_i \otimes_{s} u_j), u_i \otimes_{s} u_j \right \rangle. $$
We thus have  convenient representations for the first and second derivatives of the
function $F(X) = \log \det X$, which is a smooth function in the cone of symmetric, positive-definite matrices.
Our next step is to apply the It\^o formula (e.g. \cite[Section 5.2]{legall}) for the martingale $(A_t)_{t \geq 0}$
and the function $F$. Recall from Corollary \ref{cor_1852} that
$$ d A_t = \pi_t(dW_t), $$
and that $\pi_t: \RR^{n \times n}_{\mathrm{sym}} \rightarrow \RR^{n \times n}_{\mathrm{sym}}$ is an orthogonal projection.
Almost surely, for all $t > 0$ the matrix $A_t$ is positive definite. Hence the remark in \cite[Section 5.2]{legall}
applies and we obtain that almost surely, for $t > 0$
\begin{equation} \log \det A_t  = \log \det A_0 + \int_0^t \langle A_s^{-1}, \pi_s(d W_s) \rangle - \frac{1}{2} \int_0^t \delta_s ds,
  \label{eq_1138}
\end{equation}
 where
 $$ \delta_t = \sum_{i,j=1}^n \frac{ |\pi_t( u_i \otimes_{s} u_j)|^2}{\lambda_i \lambda_j}, $$
 and  $\lambda_1,\ldots,\lambda_n \in (0, \infty)$ are the eigenvalues of $A_t$
 while $u_1,\ldots,u_n \in \RR^n$ constitute a corresponding orthonormal basis of eigenvectors.
 We would like to obtain a lower bound on the expectation of the expression in (\ref{eq_1138}).
 As for the last summand in (\ref{eq_1138}), since $\lambda_i \leq \| A_t \|_{op}$ for all~$i$, we have
  \begin{equation}  \delta_t \geq \frac{1}{\| A_t \|_{op}^2} \sum_{i,j=1}^n |\pi_t( u_i \otimes_{s} u_j)|^2
 = \frac{1}{\| A_t \|_{op}^2} \sum_{i,j=1}^n \left \langle \pi_t ( u_i \otimes_{s} u_j), u_i \otimes_{s} u_j \right \rangle
 = \frac{1}{\| A_t \|_{op}^2} \cdot \Tr[\pi_t]. \label{eq_2216} \end{equation}
 We would like to show that the stochastic integral in (\ref{eq_1138}) has zero expectation.
 To this end, we will show  that the local martingale
 \begin{equation}  M_t = \int_0^t \langle A_s^{-1}, \pi_s(d W_s) \rangle = \int_0^t \langle \pi_s(A_s^{-1}), d W_s \rangle \qquad \qquad \qquad (t \geq 0) \label{eq_1429} \end{equation}
is in fact a martingale. Indeed, for $t > 0$,
$$ \EE |\pi_t(A_t^{-1})|^2 \leq  \EE |A_t^{-1}|^2 \leq n \cdot \EE \| A_t^{-1} \|_{op}^2  \leq n \cdot \EE \frac{\| A_t \|_{op}^{2(n-1)}}{\det^2 A_t} \leq \frac{n}{c_L^2} \cdot \EE \| A_t \|_{op}^{2(n-1)},  $$
where we used (\ref{eq_1238}) in the last passage. By Corollary \ref{cor_1833}, for any fixed $t > 0$, the random variable $\| A_t \|_{op}$
has a sub-gaussian tail, and hence, $$ \EE \| A_t \|_{op}^{2(n-1)} \leq C_n (a_0 + \sqrt{t})^{2(n-1)} $$
for some constant $C_n$ depending only on $n$. It follows that for $t > 0$,
$$ \EE \int_0^t |\pi_s(A_s^{-1})|^2 ds < \infty. $$
This implies that  $(M_t)_{t \geq 0}$ as defined in (\ref{eq_1429}) is indeed a martingale, see \cite[Section 5.1.1]{legall} or \cite[Chapter 3]{O}.
 Since $ \EE M_T = M_0 = 0$ and $A_0 = a_0 \cdot \id$,  it follows from (\ref{eq_1138}) and (\ref{eq_2216}) that
$$ \EE \log \det A_T = \log \det A_0 - \frac{1}{2} \int_0^T \EE \delta_t dt \leq n \log a_0 - \frac{1}{2} \int_0^T \EE
\| A_t \|_{op}^{-2} \cdot \Tr[\pi_t] dt. $$
 Since $\pi_t$ is the orthogonal projection operator onto the subspace $F_t \subseteq \RR^{n \times n}_{\mathrm{sym}}$ we have $\Tr[\pi_t] = \dim(F_t) = N_t$,
 and the lemma is proven.
\end{proof}

Recall the $L$-free evolving ellipsoid $\cE_t = \cE_{A_t}$ from Proposition \ref{prop_1510}.

\begin{proposition} Fix $0 < T \leq 20 \cdot n^{-5/3}$ and assume that $1 \leq a_0 \leq 1+ 10/n$. Then,
$$ \EE \log \det A_T \leq C -  \frac{n^2 T}{4} + \frac{1}{4} \int_0^T \EE |\partial \cE_t \cap L| dt, $$
where $C > 0$ is a universal constant. \label{cor_1945}
\end{proposition}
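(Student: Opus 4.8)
The goal is to upgrade the bound from Lemma~\ref{lem_1239}, namely
$$ \EE \log \det A_T \leq n \log a_0 - \frac{1}{2} \int_0^T \EE\left[ \|A_t\|_{op}^{-2} \cdot N_t \right] dt, $$
into the claimed inequality involving $n^2 T/4$ and $\int_0^T \EE |\partial\cE_t \cap L|\, dt$. There are two separate things to do: (i) control the factor $\|A_t\|_{op}^{-2}$ from below, and (ii) relate $N_t = \dim(F_t)$ to $n$ and to the number of contact points.

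\medskip\noindent\textbf{Step 1: lower bound for $\|A_t\|_{op}^{-2}$.} Since $1 \leq a_0 \leq 1 + 10/n$, the starting matrix is close to $\id$. Using Corollary~\ref{cor_1833}, $\|A_t - a_0\id\|_{op}$ has a sub-gaussian tail at scale $\sqrt{tn}$, and with $T \leq 20 n^{-5/3}$ we get $\sqrt{Tn} \leq \sqrt{20}\, n^{-1/3} \to 0$. So typically $\|A_t\|_{op} \leq a_0 + C_0\sqrt{tn} \leq 1 + 10/n + C_0 \sqrt{20}\, n^{-1/3} \leq 2$ for $n$ large, giving $\|A_t\|_{op}^{-2} \geq 1/4$ on this event. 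I would split $\EE[\|A_t\|_{op}^{-2} N_t]$ according to whether $\|A_t\|_{op} \leq 2$ or not. On the good event the integrand is $\geq N_t/4$. On the bad event $\|A_t\|_{op}^{-2} N_t \geq 0$ and the probability of the bad event is at most $C\exp(-cn^{1/3})$ by Corollary~\ref{cor_1833} (choosing $r \asymp n^{-1/3}$, which exceeds $\sqrt{Tn}$); since $N_t \leq \dim \RR^{n\times n}_{symm} = n(n+1)/2$ is bounded, the bad-event contribution to $\int_0^T \EE[\|A_t\|_{op}^{-2} N_t]\,dt$ is dominated and absorbable, but actually I don't even need an upper bound on the integrand there because I am lower-bounding the whole integral by the good-event part. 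So
$$ \int_0^T \EE\left[ \|A_t\|_{op}^{-2} N_t \right] dt \ \geq\ \frac{1}{4}\int_0^T \EE\left[ N_t \mathbf{1}_{\{\|A_t\|_{op}\leq 2\}}\right] dt. $$

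\medskip\noindent\textbf{Step 2: lower bound for $N_t$.} The subspace $F_t = F_{A_t}$ is cut out inside $\RR^{n\times n}_{symm}$ (dimension $n(n+1)/2$) by the linear conditions $Bx\cdot x = 0$ for $x \in \partial\cE_{A_t}\cap L$. Hence $N_t = \dim F_t \geq n(n+1)/2 - |\partial\cE_{A_t}\cap L|$. Since the contact points come in antipodal pairs $\pm x$, the conditions they impose are not independent — but even using the crude bound $N_t \geq n(n+1)/2 - |\partial\cE_t\cap L|$ we get, on the good event,
$$ N_t \ \geq\ \frac{n(n+1)}{2} - |\partial\cE_t \cap L| \ \geq\ \frac{n^2}{2} - |\partial\cE_t\cap L|. $$
Plugging into Step 1 and then into Lemma~\ref{lem_1239}:
$$ \EE\log\det A_T \ \leq\ n\log a_0 - \frac{1}{8}\int_0^T \EE\left[\left(\tfrac{n^2}{2} - |\partial\cE_t\cap L|\right)\mathbf{1}_{\{\|A_t\|_{op}\leq 2\}}\right]dt. $$
Now $n\log a_0 \leq n\log(1+10/n) \leq 10 =: C$. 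For the main term, write $\mathbf{1}_{\{\|A_t\|_{op}\leq 2\}} = 1 - \mathbf{1}_{\{\|A_t\|_{op}> 2\}}$; the $n^2/2$ times $1$ part gives $-\frac{n^2 T}{16}$, and I need to check the leftover terms fit. The term $\frac18\int_0^T \EE[|\partial\cE_t\cap L|]\,dt$ has the right sign but I need coefficient $1/4$, so I should be slightly more careful with constants — either improve Step 1's constant (e.g.\ $\|A_t\|_{op}\leq \sqrt 2$, giving $\|A_t\|_{op}^{-2}\geq 1/2$, hence coefficient $1/4$ throughout) or absorb the discrepancy. Using $\|A_t\|_{op} \leq 1 + o(1) \leq \sqrt{2}$ (valid since $10/n + C_0\sqrt{20}n^{-1/3} \to 0$) gives $\|A_t\|_{op}^{-2} \geq 1/2$ directly, hence
$$ \int_0^T \EE[\|A_t\|_{op}^{-2} N_t]\,dt \geq \frac12 \int_0^T \EE[N_t \mathbf{1}_{good}]\,dt \geq \frac12\int_0^T\left(\tfrac{n^2}{2} - \EE|\partial\cE_t\cap L| - \tfrac{n(n+1)}{2}\PP(\text{bad})\right)dt. $$
The $n^2/2$ contributes $-\frac{n^2 T}{4}$ after the $1/2$ and the $-1/2$ in Lemma~\ref{lem_1239}; the contact-point term contributes $+\frac14\int_0^T\EE|\partial\cE_t\cap L|\,dt$; and the bad-event remainder is $O(n^2 T \exp(-cn^{1/3})) = o(1)$, absorbed into $C$. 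The discrepancy between $n(n+1)/2$ and $n^2/2$ is $n/2$, contributing $O(nT) = o(1)$, also absorbed.

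\medskip\noindent\textbf{Main obstacle.} The only genuinely delicate point is making the constants line up: the clean target coefficients $n^2 T/4$ and $\frac14\int\EE|\partial\cE_t\cap L|$ both come out of Lemma~\ref{lem_1239}'s $\frac12$ times Step~1's $\frac12$, so I must ensure $\|A_t\|_{op}^{-2} \geq 1/2$ with overwhelming probability — which forces me to verify $a_0 + C_0\sqrt{Tn} \leq \sqrt 2$ for large $n$ using $T \leq 20 n^{-5/3}$ and $a_0 \leq 1 + 10/n$, a short computation, and to check that the lower-order error terms ($n\log a_0$, the $n/2$ gap, and the exponentially small bad-event mass times the polynomial bound $N_t, |\partial\cE_t\cap L| \leq n(n+1)/2, \tilde C_L$) are all $O(1)$ and hence swallowed by the universal constant $C$. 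None of this is hard, but it is where the proof has to be written carefully rather than sketched.
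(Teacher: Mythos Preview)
Your outline follows the paper's strategy, but there is an arithmetic slip that actually matters for the stated constants. In your final display you claim that the $n^2/2$ term contributes $-n^2 T/4$ ``after the $1/2$ and the $-1/2$ in Lemma~\ref{lem_1239}.'' But there are \emph{three} factors of $1/2$: the $1/2$ from Lemma~\ref{lem_1239}, the $1/2$ from your bound $\|A_t\|_{op}^{-2}\geq 1/2$, and the $1/2$ in $n^2/2$. The product is $n^2/8$, so your argument as written yields only
\[
\EE\log\det A_T \ \leq\ C - \frac{n^2 T}{8} + \frac{1}{4}\int_0^T \EE|\partial\cE_t\cap L|\,dt,
\]
which does not prove the proposition (and, carried through Section~\ref{sec5} with the stated choice $T=16 n^{-2}\log n$, would only recover $\delta_n \geq cn\cdot 2^{-n}$).

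To obtain the coefficient $n^2 T/4$ together with the coefficient $1/4$ on the integral, you need \emph{both} of the refinements you mention but discard. First, $\|A_t\|_{op}^{-2}\geq 1/2$ is not enough; you need $\|A_t\|_{op}^{-2}\geq 1-o(1)$ on the good event. The paper takes the $t$-dependent good event $\{\|A_t-a_0\id\|_{op}\leq C_0\sqrt{tn}\}$, uses $(a_0+C\sqrt{tn})^{-2}\geq 1-C'\sqrt{tn}-C/n$, and then checks that the resulting error $\int_0^T (C'\sqrt{t}\,n^{5/2}+\bar{C}n)\,dt$ is $O(1)$ precisely because $T\leq 20 n^{-5/3}$ (this is where that exponent comes from). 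Second, the crude bound $N_t\geq n(n+1)/2-|\partial\cE_t\cap L|$ must be replaced by $N_t\geq n(n+1)/2-|\partial\cE_t\cap L|/2$, using that $x$ and $-x$ impose the same linear constraint $Bx\cdot x=0$. With $\|A_t\|_{op}^{-2}\approx 1$ and the antipodal factor $1/2$, the two target coefficients $n^2 T/4$ and $1/4$ come out exactly; dropping either refinement loses a factor of $2$ on the wrong term.
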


\begin{proof} Fix $0 < t \leq T$ and let $\cS_t$ be the event that $$ \| A_t - a_0 \cdot \id \|_{op} \leq C_0 \sqrt{ tn}, $$
where $C_0 > 0$ is the constant from Corollary \ref{cor_1833}. Let $1_{\cS_t}$ be the indicator of $\cS_t$, that equals $1$ if the event
$\cS_t$ occurs, and that vanishes otherwise. Then,
\begin{align} \nonumber \EE \left[ \| A_t \|_{op}^{-2} \cdot N_t \right] & \geq \EE \left[ 1_{\cS_t} \| A_t \|_{op}^{-2} \cdot N_t \right] \geq (a_0 + C \sqrt{t n})^{-2} \EE [1_{\cS_t} N_t ] \\ & = (a_0 + C \sqrt{t n})^{-2} \left( \EE[N_t] - \EE[(1 - 1_{\cS_t}) N_t] \right). \label{eq_2227}
\end{align}
Since $N_t = \dim(F_t) \leq n (n+1) / 2 \leq n^2$, by Corollary \ref{cor_1833},
$$ \EE[(1 - 1_{\cS_t}) N_t]  \leq n^2 \EE[1 - 1_{\cS_t}] = n^2 \cdot \PP( \| A_t - a_0 \cdot \id \|_{op} > C_0 \sqrt{ tn} ) \leq C n^2 e^{-n}. $$
Therefore, by using (\ref{eq_2227}) and the inequalities $|a_0 -1| \leq C /n$ and $N_t \leq n^2$,
$$ \EE \left[ \| A_t \|_{op}^{-2} \cdot N_t \right]
\geq (1 - C' \sqrt{tn} - C/n) \EE[N_t] - \tilde{C} e^{-n/2}
\geq \EE[N_t] - C' \sqrt{t} \cdot n^{5/2} - \bar{C}  n.
$$
By integrating over $t$ and recalling that $T \leq C n^{-5/3}$ we thus obtain
\begin{align*} \int_0^T \EE \left[ \| A_t \|_{op}^{-2} \cdot N_t \right]  dt
 \geq \int_0^T \EE \left[ N_t \right] dt - C' T^{3/2}n^{5/2} - T \cdot \bar{C} n
  \geq \int_0^T \EE \left[ N_t \right] dt - \hat{C}.
\end{align*}
Therefore, from Lemma \ref{lem_1239},
$$ \EE \log \det A_T \leq n \log (1 + 10/n) -\frac{1}{2} \int_0^T \EE \left[ \| A_t \|_{op}^{-2} \cdot N_t \right] dt \leq C' - \frac{1}{2}
\int_0^T \EE \left[ N_t \right] dt. $$
The subspace $F_t = F_{A_t}$ is defined in (\ref{eq_1730}) as the orthogonal complement in $\RR^{n \times n}_{\mathrm{sym}}$
to the subspace~$E$ spanned by $x \otimes x \ \ (x \in \partial \cE_t \cap L)$. The dimension of the subspace $E$
is at most $|\partial \cE_t \cap L| / 2$, since $\partial \cE_t \cap L = -(\partial \cE_t \cap L)$ while $0 \not \in \partial \cE_t \cap L$. Therefore,
$$ N_t = \dim(F_t) = \dim(\RR^{n \times n}_{\mathrm{sym}}) - \dim(E) \geq \frac{n(n+1)}{2} - \frac{|\partial \cE_t \cap L|}{2}. $$
Consequently,
$$ \EE \log \det A_T \leq C' - \frac{1}{2} \int_0^T \EE \left[ N_t \right] dt
\leq C' - T \frac{n(n+1)}{4} + \frac{1}{4} \int_0^T \EE |\partial \cE_t \cap L| dt , $$
completing the proof.
\end{proof}

\begin{remark} \label{rem_1814}
In the notation of Proposition \ref{prop_1510}, for $t = \tau_* = \tau_M$ the $L$-free ellipsoid $\cE_t \subset \RR^n$ almost surely satisfies
\begin{equation}  |\partial \cE_t \cap L| \geq n(n+1). \label{eq_1813} \end{equation}
Indeed, since $F_t = \{ 0 \}$ by Proposition \ref{prop_1510}(B),
we know that the matrices $x \otimes x \ \ (x \in \partial \cE_t \cap L)$
span $\RR^{n \times n}_{\mathrm{sym}}$. The number of such distinct matrices is at most $|\partial \cE_t \cap L|/2$,
since $\partial \cE_t \cap L = -(\partial \cE_t \cap L)$.
Thus (\ref{eq_1813}) follows from the fact that $\dim(\RR^{n \times n}_{\mathrm{sym}}) = n(n+1)/2$.
\end{remark}

\section{Lattice points on the boundary of the evolving ellipsoid}
\label{sec4}

We keep the notation and assumptions of the previous section. Thus $L \subset \RR^n$ is a fixed lattice, $a_0 > 0$ is such that the matrix $ a_0 \cdot \id$
is $L$-free, and we study the stochastic process $(A_t)_{t \geq 0}$ introduced in Proposition \ref{prop_1510}.
The following proposition is a step toward showing that, for a typical lattice $L \subset \RR^n$,
the number of lattice contact points of the $L$-free ellipsoid $\cE_t = \cE_{A_t}$ is usually at most $$ C \exp(n^2 t/8). $$

\begin{proposition} For any fixed $T > 0$ and $0 \neq x \in L$,
$$ \PP( x \in \partial \cE_T ) \, \leq \, 2 \PP \left( Z  \geq \frac{1}{\sqrt{T}} \left( a_0 - \frac{1}{|x|^2} \right) \right), $$
where $Z \sim N(0,1)$ is a standard Gaussian random variable.
\label{lem_1503}
\end{proposition}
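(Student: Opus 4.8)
The plan is to track the scalar process $X_t := A_t x \cdot x$ for the fixed nonzero lattice point $x \in L$, and to show that the event $\{x \in \partial \cE_T\}$ is governed by whether a certain time-changed Brownian motion reaches the level $1$. From Corollary \ref{cor_1852} we have $dA_t = \pi_t(dW_t)$, hence
\begin{equation*}
dX_t = \langle dA_t, x \otimes x \rangle = \langle \pi_t(dW_t), x \otimes x \rangle = \langle dW_t, \pi_t(x \otimes x) \rangle,
\end{equation*}
since $\pi_t$ is an orthogonal projection (self-adjoint). Thus $X_t$ is a continuous local martingale started at $X_0 = a_0 |x|^4$ (using $\id \, x \cdot x = |x|^2$ and $x\otimes x$ has squared Frobenius norm $|x|^4$; I would normalize carefully) with quadratic variation $d[X]_t = |\pi_t(x\otimes x)|^2\, dt \leq |x\otimes x|^2\, dt = |x|^4\, dt$. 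So $[X]_t \leq |x|^4 t$ for all $t$.

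The key structural observation is a \emph{barrier/absorption} property: once $x \in \partial \cE_{\tau_i} \cap L$ for some $i$, the construction in Proposition \ref{prop_1510} forces $x \otimes x$ to lie in the kernel of $\pi_t$ for all later $t$, so $X_t$ is frozen at the value $1$ thereafter; and before that, $x \notin \partial \cE_t$ exactly means $X_t \neq 1$ — more precisely, by Proposition \ref{prop_1510}(C)/(D) and the $L$-freeness, while $x \notin \partial\cE_t\cap L$ we have $A_t x\cdot x > 1$ (the point stays strictly outside the open ellipsoid, since $L$-freeness gives $A_t x \cdot x \geq 1$ and equality would put $x$ on the boundary). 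Therefore $\{x \in \partial \cE_T\}$ is contained in the event that the continuous martingale $X_t$, started strictly above $1$, attains the value $1$ at or before time $T$, i.e. $\{\min_{[0,T]} X_t \leq 1\}$. After rescaling $Y_t := X_t / |x|^4$, this is $\{\min_{[0,T]} Y_t \leq 1/|x|^4\}$ where $Y_0 = a_0$ and $[Y]_t \leq t$. Hmm — I need to reconcile this with the stated bound, which involves $a_0 - 1/|x|^2$, suggesting the right normalization is $X_t = A_t x\cdot x / |x|^2$ so that $[X]_t \le |x|^2\cdot(\text{something})$; I would choose the normalization $\tilde X_t := (A_t x\cdot x)/|x|^2$, giving $\tilde X_0 = a_0$, target level $1/|x|^2$, and quadratic variation bounded by $t$ after checking $|\pi_t(x\otimes x)|^2/|x|^4 \le 1$. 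The displayed inequality then says exactly: $\PP(\min_{[0,T]}\tilde X_t \le 1/|x|^2)$ is controlled by a Gaussian reaching level $(a_0 - 1/|x|^2)/\sqrt T$ below its start.

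To convert "continuous martingale with quadratic variation $\leq t|x|^2$, started at $a_0|x|^2$, hits level $1$ by time $T$" into the Gaussian bound, I would use the Dambis–Dubins–Schwarz theorem: write $\tilde X_t = \tilde X_0 + \beta_{[\tilde X]_t}$ for a standard one-dimensional Brownian motion $\beta$, with $[\tilde X]_t \leq t$. Then $\min_{[0,T]}\tilde X_t \leq 1/|x|^2$ implies $\min_{[0,[\tilde X]_T]}(\tilde X_0 + \beta_s) \leq 1/|x|^2$, hence $\min_{[0,T]}(\tilde X_0 + \beta_s) \leq 1/|x|^2$ since $[\tilde X]_T \le T$. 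By the reflection principle, $\PP(\min_{[0,T]} \beta_s \leq -u) = 2\PP(\beta_T \leq -u) = 2\PP(\sqrt T Z \le -u) = 2\PP(Z \geq u/\sqrt T)$ for $u \geq 0$, with $Z \sim N(0,1)$. Applying this with $u = a_0 - 1/|x|^2$ (which is the relevant quantity; when it is negative the bound is trivially $\geq 1$ and there is nothing to prove) yields exactly the claimed inequality.

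The main obstacle — and the step deserving the most care — is the rigorous justification that the absorption/barrier mechanism really does imply $\{x \in \partial \cE_T\} \subseteq \{\min_{[0,T]} \tilde X_t \leq 1/|x|^2\}$, \emph{as an inclusion of events measurable enough to apply DDS}. One must argue: (i) before $x$ is absorbed, $\tilde X_t > 1/|x|^2$ strictly (from $L$-freeness plus $x\notin\partial\cE_t$); (ii) at the absorption time $\tau_i$ the value is exactly $1/|x|^2$ by continuity; (iii) after absorption $\pi_t(x\otimes x)=0$, so $\tilde X_t$ is frozen and its quadratic variation contributes nothing, so the bound $[\tilde X]_t \le t$ holds globally regardless; and (iv) $x \in \partial\cE_T$ happens iff $x$ was absorbed at some $\tau_i \le T$, in which case $\min_{[0,T]}\tilde X \le \tilde X_{\tau_i} = 1/|x|^2$. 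A secondary technical point is verifying $\tilde X_t$ is a genuine martingale (not merely a local one) on $[0,T]$ so that DDS applies cleanly — this follows from the bounded quadratic variation $[\tilde X]_T \le T$, which makes $\tilde X$ an $L^2$-bounded continuous martingale. Once these are in place, the reflection-principle computation is routine.
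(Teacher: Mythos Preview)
Your approach is correct and essentially identical to the paper's: track the scalar martingale $A_t x\cdot x$, bound its quadratic variation by $t|x|^4$, apply Dambis--Dubins--Schwarz, and finish with the reflection principle. Two remarks: your initial value $X_0 = a_0|x|^4$ is a slip (it is $a_0|x|^2$), which you implicitly correct later; and the ``main obstacle'' you worry about---the inclusion $\{x\in\partial\cE_T\}\subseteq\{\min_{[0,T]}\tilde X_t\le 1/|x|^2\}$---is in fact immediate, since $x\in\partial\cE_T$ literally means $\tilde X_T = 1/|x|^2$, so the paper dispenses with the absorption/barrier discussion in one line.
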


\begin{proof} Since the matrix $a_0 \cdot \id$ is $L$-free, necessarily $a_0 |x|^2 \geq 1$. If $a_0 |x|^2 = 1$ then the conclusion of the lemma holds trivially, so let us assume that $a_0 |x|^2 > 1$.
	For $t \geq 0$ denote $$ M_t = A_t x \cdot x  - 1 =  \langle A_t, x \otimes x \rangle - 1.$$ If $M_t > 0$ then $x \not \in \partial \cE_t$, and hence
\begin{equation}
\PP( x \in \partial \cE_T ) \, \leq \, \PP(M_T \leq 0) \, \leq \, \PP \left( \inf_{0 \leq t \leq T} M_t \leq 0
\right). \label{eq_1438}
\end{equation}
By Corollary \ref{cor_1852},
	$$ d M_t =  \langle \pi_t (dW_t), x \otimes x \rangle =
	 \langle dW_t, \pi_t (x \otimes x) \rangle. $$
Thus $(M_t)_{t \geq 0}$ is a continuous martingale with $M_0 = a_0 |x|^2 - 1 > 0$. Its quadratic variation is given by
\begin{equation}  [M]_t =  \int_0^t |\pi_s(x \otimes x)|^2 ds \leq  \int_0^t |x \otimes x|^2 ds = t |x|^4, \label{eq_1430} \end{equation}
where we used the fact that $\pi_s$ is an orthogonal projection.
As in \cite[Section 5.3.2]{legall}, for $t \geq 0$ denote
\begin{equation}
R_t = \inf \{ s \geq 0 \, ; \, [M]_s \geq	 t \}, \label{eq_1842} \end{equation}
where the infimum of an empty set is defined as $+\infty$.
Almost surely, the function $R_t$ is non-decreasing in $t$
and  $M_{R_{t}}$ is continuous in $t$ in the set $\{ t \geq 0 \, ; \, R_t < \infty \}$.  The Dambis-Dubins-Schwartz Theorem (e.g. \cite[Theorem 5.13]{legall} or \cite[Chapter V]{RY}) states that
there exists a standard Brownian motion $(B_t)_{t \geq 0}$ in $\RR$
such that for all $t \geq 0$,
$$ M_{R_t} - M_0 = B_t $$
whenever $R_t < \infty$.
It follows from (\ref{eq_1430}) and (\ref{eq_1842}) that $R_t \geq t / |x|^4$.  Consequently, almost surely,
$$ \inf_{0 \leq t \leq T |x|^4} [M_0 + B_t] \leq
\inf_{0 \leq t \leq T |x|^4 \atop{R_t < \infty}} M_{R_t}
\leq \inf_{0 \leq t \leq T} M_t,  $$
where we used the continuity of $M_{R_t}$ in the last passage. Thus, from (\ref{eq_1438}),
$$
\PP( x \in \partial \cE_T )  \leq \PP \left( \inf_{0 \leq t \leq T |x|^4} [M_0 + B_t] \leq 0\right). $$
By the reflection principle for the standard Brownian motion (e.g.  \cite[Section III.3]{RY}),
$$   \PP \left (\inf_{0 \leq t \leq T|x|^4} [M_0 + B_t] \leq 0 \right)
= \PP \left (\sup_{0 \leq t \leq T|x|^4} B_t \geq M_0  \right)
= 2 \PP \left (B_{T |x|^4} \geq M_0 \right). $$
The law of $B_{T |x|^4}$ is the same as the law of $\sqrt{T} |x|^2 \cdot Z$. Therefore,
$$ \PP( x \in \partial \cE_T ) \leq 2 \PP \left( \sqrt{T} |x|^2 \cdot Z  \geq M_0
\right)=  2 \PP \left( Z  \geq \frac{1}{\sqrt{T}} \left( a_0 - \frac{1}{|x|^2} \right) \right). $$
\end{proof}

For $r \geq 0$ denote
\begin{equation}
 \Phi(r) = \min \left \{ \frac{1}{2}, \frac{e^{-r^2/2}}{\sqrt{2 \pi} \cdot r} \right \},
\label{eq_2237} \end{equation}
with $\Phi(0) = \min \{1/2, +\infty \} = 1/2$.
It is well-known that if $Z$ is a standard Gaussian random variable, then  for $r \geq 0$,
\begin{equation} \PP(Z \geq r) = \frac{1}{\sqrt{2 \pi}} \int_r^{\infty} e^{-x^2/2} dx \leq \min \left \{ \frac{1}{2}, \frac{1}{\sqrt{2 \pi}} \int_r^{\infty} \frac{x}{r} \cdot e^{-x^2/2} dx \right \} = \Phi(r). \label{eq_2226}
\end{equation}
For $t \geq 0$ we define
\begin{equation}  D_t = \{ x \in \RR^n \, ; \, (a_0 - C_0 \sqrt{tn}) |x|^2 < 1 \}, \label{eq_1212} \end{equation} where
$C_0 > 0$ is the universal constant from
Corollary \ref{cor_1833}. For $t > 0$ we consider the non-negative number
\begin{equation}  K_t(L) = \sum_{0 \neq x \in L  \cap D_t} \Phi \left( \frac{1}{\sqrt{t}} \left( a_0 - \frac{1}{|x|^2}  \right) \right).  \label{eq_1829_} \end{equation}

\begin{proposition} For any $t > 0$,
$$  \EE |\partial \cE_t \cap L| \leq 2 K_t(L) + C e^{-c n}, $$
where $C, c > 0$ are universal constants.
\label{cor_1832}
\end{proposition}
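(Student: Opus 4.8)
The plan is to bound $\EE |\partial \cE_t \cap L|$ by summing the pointwise estimate from Proposition \ref{lem_1503} over all $0 \neq x \in L$, and then to show that only the lattice points lying in the ball $B_t$ (as defined in \eqref{eq_1212}) contribute a non-negligible amount, the remaining points contributing at most $Ce^{-cn}$. First I would write
$$ \EE |\partial \cE_t \cap L| = \sum_{0 \neq x \in L} \PP(x \in \partial \cE_t) \leq \sum_{0 \neq x \in L} 2 \PP\left( Z \geq \frac{1}{\sqrt{t}}\left( a_0 - \frac{1}{|x|^2} \right) \right) \leq \sum_{0 \neq x \in L} 2 \Phi\left( \frac{1}{\sqrt{t}}\left( a_0 - \frac{1}{|x|^2} \right) \right), $$
using Proposition \ref{lem_1503} for the middle inequality and \eqref{eq_2226} for the last, after checking that the argument $a_0 - 1/|x|^2$ is non-negative (which holds because $a_0 \cdot \id$ is $L$-free, so $a_0 |x|^2 \geq 1$ for every $0 \neq x \in L$). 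Note the expectation equals the sum over $x$ of indicator probabilities by Tonelli, since everything is non-negative; and the sum is in fact finite since $|\partial \cE_t \cap L|$ is bounded by $\tilde C_L$, but I will not need that.

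Next I would split the sum according to whether $x \in B_t$ or not. For $x \in L \cap B_t$ the corresponding terms sum to exactly $2 K_t(L)$ by the definition \eqref{eq_1829_}. It remains to control the tail $\sum_{0 \neq x \in L \setminus B_t} 2 \Phi(\cdot)$. For $x \notin B_t$ we have $(a_0 - C_0\sqrt{tn})|x|^2 \geq 1$, i.e. $1/|x|^2 \leq a_0 - C_0 \sqrt{tn}$, hence
$$ \frac{1}{\sqrt{t}}\left( a_0 - \frac{1}{|x|^2} \right) \geq \frac{1}{\sqrt{t}} \cdot C_0 \sqrt{tn} = C_0 \sqrt{n}, $$
so each such term is at most $2\Phi(C_0\sqrt{n}) \leq 2 e^{-C_0^2 n/2}/(\sqrt{2\pi}\,C_0\sqrt{n})$ — an exponentially small quantity in $n$, but multiplied by the number of lattice points outside $B_t$, which is infinite. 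The main obstacle is therefore to get an effective bound on how far out in $L$ the terms $\Phi\bigl(t^{-1/2}(a_0 - |x|^{-2})\bigr)$ reach, so that the tail sum converges and is still of size $e^{-cn}$.

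To handle this I would exploit that as $|x| \to \infty$ the argument of $\Phi$ grows like $a_0/\sqrt{t}$, and more importantly that $\Phi$ decays Gaussian-fast in this argument, so that
$$ \Phi\left( \frac{1}{\sqrt t}\left(a_0 - \frac{1}{|x|^2}\right)\right) \leq \Phi\left( \frac{1}{\sqrt t}\left(a_0 - \frac{1}{R^2}\right)\right) \quad\text{whenever } |x| \geq R, $$
combined with a lattice point count $|L \cap (R B^n \setminus r B^n)| \lesssim (R/\lambda_1(L))^n$ or a dyadic decomposition of the annular regions $\{2^k r_0 \leq |x| < 2^{k+1} r_0\}$ outside $B_t$. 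In each dyadic shell the number of lattice points is at most $C (2^{k} r_0 / \lambda_1)^n$ (a Minkowski-type volume count, valid since $L$ is a lattice of covolume $Vol_n(\RR^n/L)$ and $\lambda_1$ is its first minimum), while the value of $\Phi$ on that shell is at most $\exp\bigl(-\tfrac{1}{2t}(a_0 - (2^k r_0)^{-2})^2\bigr)$; one checks that the geometric-type growth $2^{kn}$ is dominated by the super-exponential decay of $\Phi$ once $2^k r_0$ is comparable to the radius of $B_t$, so the whole tail is at most $C e^{-cn}$. The delicate bookkeeping is choosing the shell radii and confirming that the first shell already sits at argument $\geq C_0\sqrt n$ (which it does, by the computation above, since the innermost point outside $B_t$ has $|x| \geq (a_0 - C_0\sqrt{tn})^{-1/2}$); from there, doubling $|x|$ at least doubles the argument's distance past $C_0 \sqrt n$ while the point count only multiplies by $2^n$, so the series converges with sum $\leq C e^{-cn}$, and adding this to the $2K_t(L)$ main term gives the claim.
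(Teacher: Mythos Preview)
Your approach has a genuine gap in the tail estimate. The issue is that the bound from Proposition~\ref{lem_1503} does \emph{not} decay as $|x|\to\infty$: the argument
$$ \frac{1}{\sqrt t}\left(a_0 - \frac{1}{|x|^2}\right) $$
is an increasing function of $|x|$, but it is bounded above by $a_0/\sqrt t$. Consequently every term in your tail sum satisfies $\Phi(\cdot)\ge \Phi(a_0/\sqrt t)>0$, and since there are infinitely many lattice points outside $B_t$ (whenever $a_0-C_0\sqrt{tn}>0$), the tail sum $\sum_{0\neq x\in L\setminus B_t}2\Phi(\cdot)$ diverges. Your dyadic claim that ``doubling $|x|$ at least doubles the argument's distance past $C_0\sqrt n$'' is false: that distance is bounded by $a_0/\sqrt t - C_0\sqrt n$, and after the first doubling the multiplicative gain per dyadic step tends to $1$, while the point count gains a factor $2^n$. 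So the series does not converge, let alone to something of size $e^{-cn}$.

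The fix, which is what the paper does, is to abandon Proposition~\ref{lem_1503} for the tail entirely. Instead, apply Corollary~\ref{cor_1833} with $r=\sqrt{tn}$ to get
$$ \PP\bigl(\|A_t-a_0\cdot\id\|_{op}\ge C_0\sqrt{tn}\bigr)\le Ce^{-n}. $$
On the complementary event one has $A_t>(a_0-C_0\sqrt{tn})\cdot\id$, hence $\overline{\cE_t}\subseteq B_t$ and therefore $\partial\cE_t\cap L=\partial\cE_t\cap L\cap B_t$. On the exceptional event use the deterministic bound $|\partial\cE_t\cap L|\le 2(2^n-1)$ from~(\ref{eq_643}), valid since $L$ is a lattice and $\cE_t$ is $L$-free. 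This gives
$$ \EE|\partial\cE_t\cap L|\le \EE|\partial\cE_t\cap L\cap B_t|+2(2^n-1)\cdot Ce^{-n}\le 2K_t(L)+C'(2/e)^n, $$
where the first expectation is handled exactly as in your first display, restricted to $x\in B_t$.
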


\begin{proof}	By the linearity of expectation and Proposition
	\ref{lem_1503},
	$$ \EE |\partial \cE_t \cap L \cap D_t| = \sum_{0 \neq x \in L \cap D_t} \PP(x \in \partial \cE_t) \leq 2\sum_{0 \neq x \in L \cap D_t} \PP \left( Z  \geq \frac{1}{\sqrt{t}} \left( a_0 - \frac{1}{|x|^2} \right) \right), $$
	where $Z$ is a standard Gaussian random variable.
	The matrix $a_0 \cdot \id$ is $L$-free. Thus $a_0 - 1/|x|^2 \geq 0$ for $x \in L$, and we may use the  bound (\ref{eq_2226})
	and the definition (\ref{eq_1829_}) of $K_t(L)$ to conclude that
\begin{equation} \EE |\partial \cE_t \cap L \cap D_t| \leq  2K_t(L).
\label{eq_658} \end{equation}
We claim that
\begin{equation}
\PP \left( \partial \cE_t \cap L \cap D_t \neq \partial \cE_t \cap L \right) \leq C e^{-n}.
\label{eq_659} \end{equation}
Indeed, it suffices to  prove that
$$
	\PP \left( \overline{\cE_t} \subseteq D_t \right) \geq 1 - C e^{-n},
$$
where $\overline{\cE_t} \subset \RR^n$ is the closure of the ellipsoid $\cE_t = \cE_{A_t} \subset \RR^n$. Equivalently, we need to show that
$$ \PP \left( A_t > \left( a_0 - C_0 \sqrt{tn} \right) \id \right) \geq 1 - C e^{-n}. $$
This follows from Corollary \ref{cor_1833}, proving (\ref{eq_659}). Recall from (\ref{eq_643})
that $|\partial \cE_t \cap L| \leq 2 \cdot (2^n - 1)$ as $\cE_t$ is an $L$-free ellipsoid and $L \subset \RR^n$ is a lattice. Thus, from (\ref{eq_658}) and (\ref{eq_659}),
\begin{align*}
\EE |\partial \cE_t \cap L| & \leq \PP \left( \partial \cE_t \cap L \cap D_t \neq \partial \cE_t \cap L \right) \cdot 2 \cdot (2^n - 1) + \EE |\partial \cE_t \cap L \cap D_t|
\\ & \leq C (2/e)^n + 2K_t(L),
\end{align*}
completing the proof.
\end{proof}

In view of Proposition \ref{cor_1945} and Proposition \ref{cor_1832}, we would like to understand
how large $K_t(L)$ is for a typical lattice $L$. Recall that  the parameter $K_t(L)$ is the {\it sum} of the function
\begin{equation} x \mapsto  \Phi \left( \frac{1}{\sqrt{t}} \left( a_0 - \frac{1}{|x|^2}  \right) \right) \label{eq_1708_} \end{equation}
over all non-zero lattice points in a
certain Euclidean ball. In the next  lemma we analyze the {\it integral} of the function from (\ref{eq_1708_}) over
a spherical shell approximating this ball.
This lemma will subsequently be combined with the Siegel summation formula, which states that for random lattices, the expectation of such a sum equals the corresponding integral.
Recall that $\Vol_n(B^n)$ is the volume of the $n$-dimensional unit ball.

\begin{lemma} Let $t > 0$. Assume that $0 < t \leq 20 n^{-2} \cdot \log n$
and $1 \leq a_0 \leq 1 + 10/n$. Consider the spherical shell
\begin{equation}  R = R_t = \left \{ x \in \RR^n \, ; \,  \frac{1}{a_0} \leq |x|^2 < \frac{1}{a_0 - C_0 \sqrt{tn}} \right \},
\label{eq_1210} \end{equation}
 where
	$C_0 > 0$ is the universal constant from
	Corollary \ref{cor_1833}. Then,
	$$ \int_R \Phi \left( \frac{1}{\sqrt{t}} \left( a_0 - \frac{1}{|x|^2}  \right) \right) dx \leq C e^{n^2 t/8} \cdot \Vol_n(B^n), $$
where $C > 0$ is a universal constant.
	\label{lem_1102}
\end{lemma}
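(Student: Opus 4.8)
The plan is to collapse the $n$-dimensional integral to a one-dimensional one via polar coordinates, and then reduce matters to a clean Gaussian-type estimate. Write $\rho = |x|$; the shell $R_t$ is $\{\rho_0 \le \rho < \rho_1\}$ with $\rho_0 = a_0^{-1/2}$ and $\rho_1 = (a_0 - C_0\sqrt{tn})^{-1/2}$, where $C_0$ is the constant from Corollary \ref{cor_1833}. Since $a_0 \ge 1$ while $nt \le 20 n^{-1}\log n \to 0$, we have $a_0 - C_0 \sqrt{tn} > 1/2 > 0$ for large $n$, so $R_t$ is a genuine spherical shell. Passing to polar coordinates and substituting $\rho = (a_0 - r \sqrt t)^{-1/2}$, under which $\tfrac{1}{\sqrt t}(a_0 - \rho^{-2}) = r$ and $r$ ranges over $[0, C_0\sqrt n]$, turns the integral into
$$ \int_{R_t}\Phi\!\left( \tfrac{1}{\sqrt t}\bigl( a_0 - |x|^{-2} \bigr) \right) dx \; = \; \frac{n \sqrt t}{2}\,Vol_n(B^n) \int_0^{C_0 \sqrt n} \Phi(r)\, (a_0 - r\sqrt t)^{-(n+2)/2}\, dr . $$
Thus it suffices to bound the one-dimensional integral on the right by $\tfrac{C}{n\sqrt t} e^{n^2 t/8}$.

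For the density factor, I would note that on $0 \le r \le C_0\sqrt n$ we have $r\sqrt t \le C_0\sqrt{nt} =: \eta_n \to 0$, so $r\sqrt t \le 1/2$ for large $n$; then, using $a_0 \ge 1$ together with $-\log(1-u) \le u + u^2 \le u(1+\eta_n)$ for $0 \le u \le \eta_n$,
$$ (a_0 - r\sqrt t)^{-(n+2)/2} \;\le\; (1 - r\sqrt t)^{-(n+2)/2} \;\le\; \exp\!\left( \tfrac{(n+2)(1+\eta_n)}{2}\, r\sqrt t \right) \;=\; e^{\gamma_n r}, \qquad \gamma_n := \tfrac{(n+2)(1+\eta_n)}{2}\sqrt t . $$
Since $\tfrac{n\sqrt t}{2} \le \gamma_n$, and since $t \le 20 n^{-2}\log n$ forces $\tfrac{t}{8}\bigl((n+2)^2(1+\eta_n)^2 - n^2\bigr) = O\!\bigl( n^{-1}\log n + n^{-1/2}(\log n)^{3/2} \bigr) \to 0$, we obtain $\gamma_n^2/2 \le n^2 t/8 + 1$ for large $n$. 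Hence the lemma follows once we establish the universal inequality
$$ \gamma \int_0^\infty \Phi(r)\, e^{\gamma r}\, dr \;\le\; C\, e^{\gamma^2/2} \qquad\qquad \text{for all } \gamma \ge 0 , $$
because then $\tfrac{n\sqrt t}{2}\int_0^{C_0\sqrt n}\Phi(r)(a_0 - r\sqrt t)^{-(n+2)/2}dr \le \gamma_n\int_0^\infty\Phi(r)e^{\gamma_n r}dr \le C e^{\gamma_n^2/2} \le Ce\cdot e^{n^2 t/8}$.

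The remaining inequality I would prove by an elementary case analysis. For $\gamma \le 1$ it is immediate, since $\int_0^\infty\Phi(r)e^r\,dr$ is a finite universal constant. For $\gamma > 1$, split the integral at $r=1$: on $[0,1]$ use $\Phi(r)\le 1/2$ to get $\gamma\int_0^1\Phi(r)e^{\gamma r}dr \le \tfrac12 e^\gamma$, which is $\le C e^{\gamma^2/2}$ because $\tfrac{\gamma^2}{2}-\gamma \ge -\tfrac12$; on $[1,\infty)$ use $\Phi(r)\le \tfrac{e^{-r^2/2}}{\sqrt{2\pi}\,r}$ and complete the square, $-\tfrac{r^2}{2}+\gamma r = -\tfrac{(r-\gamma)^2}{2}+\tfrac{\gamma^2}{2}$, reducing matters to $\gamma\int_1^\infty \tfrac{1}{r}e^{-(r-\gamma)^2/2}dr \le C$. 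This last bound follows by splitting that integral at $r=\gamma/2$ (the regime $1<\gamma\le 2$ being even easier): on $[1,\gamma/2]$ the Gaussian is at most $e^{-\gamma^2/8}$, so the contribution is at most $\tfrac{\gamma^2}{2}e^{-\gamma^2/8}\le C$; on $[\gamma/2,\infty)$ one has $\tfrac1r\le\tfrac2\gamma$, so the contribution is at most $\tfrac{2}{\gamma}\sqrt{2\pi}$; hence $\gamma$ times the sum is bounded.

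I expect the only genuinely delicate point to be the interplay between the prefactor $\tfrac{n\sqrt t}{2}$ and the target universal constant: this prefactor is of order $\sqrt{\log n}$ and hence unbounded, so the one-dimensional estimate must produce a compensating factor of order $1/(n\sqrt t)$, comparable to $1/\gamma_n$ — which is precisely why the $1/r$ tail of $\Phi$ must be retained in the range $r$ near and beyond $\gamma_n \sim n\sqrt t/2$, rather than discarded via the cruder bound $\Phi(r)\le e^{-r^2/2}$ (which would leave an uncontrolled factor $\gamma_n$). A secondary bit of care is purely arithmetic: all the approximations $(n+2)\leftrightarrow n$, $-\log(1-u)\leftrightarrow u$, and $(1+\eta_n)\leftrightarrow 1$ must contribute only $o(1)$ to the exponent, so that the bound comes out with the precise constant $1/8$ and not $(1+c)/8$; this is guaranteed by $nt = O(n^{-1}\log n)$ and $\eta_n = O(n^{-1/2}(\log n)^{1/2})$.
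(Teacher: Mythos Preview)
Your proof is correct and follows the same core strategy as the paper: pass to polar coordinates, make the substitution $r = t^{-1/2}(a_0 - \rho^{-2})$, bound the density factor $(1-r\sqrt t)^{-(n+2)/2}$ by an exponential, and then complete the square against the Gaussian in $\Phi$. The organization differs in one respect worth noting. The paper splits the resulting one-dimensional integral into three $n$-dependent pieces $[0,1]\cup[1,\log n]\cup[\log n,C_0\sqrt n]$, handling the middle piece via $1-x\ge e^{-x-x^2}$ and completion of the square, and the tail piece by a crude Gaussian bound. You instead observe that $-\log(1-u)\le u(1+\eta_n)$ holds \emph{uniformly} on the full range $u\in[0,\eta_n]$, which lets you bound the density by a pure exponential $e^{\gamma_n r}$ everywhere and then reduce to the clean, dimension-free inequality $\gamma\int_0^\infty\Phi(r)e^{\gamma r}\,dr\le Ce^{\gamma^2/2}$. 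Your packaging is arguably tidier---it cleanly separates the universal Gaussian estimate from the $n$-dependent arithmetic---while the paper's three-way split makes the role of the $\log n$ cutoff a bit more explicit; the underlying analysis is the same. Your closing remarks about why the $1/r$ in $\Phi$ cannot be discarded, and why the various $o(1)$ corrections to the exponent must be tracked, are exactly the right points.
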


\begin{proof} Denote $\kappa_n = \Vol_n(B^n)$ and $a_1 = a_0 - C_0 \sqrt{tn}$. Integrating in polar coordinates,
\begin{align*}
I :=	\int_R  \Phi \left( \frac{1}{\sqrt{t}} \left( a_0 - \frac{1}{|x|^2}  \right) \right) & dx = n \kappa_n \int_{1/\sqrt{a_0}}^{1/\sqrt{a_1}} \Phi \left( \frac{ a_0 - 1/r^2}{\sqrt{t}} \right) r^{n-1} dr.
\end{align*}
Changing variables $y = t^{-1/2} (a_0 - 1/r^2)$  we see that
$$ I = \frac{n \kappa_n \sqrt{t}}{2 } \int_0^{C_0 \sqrt{n}}  \frac{\Phi(y)}{(a_0 - \sqrt{t} y)^{\frac{n+2}{2}}}  dy
=\frac{n \kappa_n \sqrt{t}}{2 } a_0^{-\frac{n+2}{2}} \int_0^{C_0 \sqrt{n}}  \frac{\Phi(y)}{(1 - y\sqrt{t}/a_0)^{\frac{n+2}{2}}}  dy.
$$
Recall that $a_0 \geq 1$ while $t \leq 20 n^{-2} \cdot \log n$. Thus $y\sqrt{t}/a_0 \leq y\sqrt{t} < 1$ for all $y \in (0, C_0 \sqrt{n})$,
assuming that $n$ exceeds a certain given universal constant. Consequently,
\begin{equation}  \frac{I}{\kappa_n} \leq \frac{n \sqrt{t}}{2}  \int_0^{C_0 \sqrt{n}}  \frac{\Phi(y)}{(1 - y\sqrt{t})^{\frac{n+2}{2}}}  dy =  \frac{n \sqrt{t}}{2}  \cdot \left( I_1 + I_2 + I_3 \right), \label{eq_1150} \end{equation}
where $I_1$ is the integral from $0$ to $1$, where $I_2$ is the integral from $1$ to $\log n$ and where $I_3$ is the integral from $\log n$ to $C_0 \sqrt{n}$.
Begin by bounding $I_1$. To this end we will use
the elementary inequality $1-x \geq \exp(-2x)$ for $0 < x \leq 1/2$.
Since $\Phi(y) \leq 1/2$ and $t \leq 20 n^{-2} \cdot \log n$,
\begin{equation}  I_1 = \int_0^{1} \frac{\Phi(y)}{(1 - y\sqrt{t})^{\frac{n+2}{2}}}  dy
\leq \frac{1}{2} (1 - \sqrt{t})^{-\frac{n+2}{2}} \leq e^{(n+2) \sqrt{t}}
\leq C e^{n \sqrt{t}} \leq C' \frac{e^{n^2 t / 8}}{n \sqrt{t}}, \label{eq_1152_}
\end{equation}
where we used  the bound $e^x \leq C x^{-1} \cdot e^{x^2 / 8}$ for $x > 0$,
as well as our  standing assumption that $n$ is sufficiently large.
Next, we bound $I_3$  using the same elementary inequality. Since  $\sqrt{t} \leq 5 n^{-1} \cdot \sqrt{\log n}$,
$$ I_3 \leq \int_{\log n}^{C_0 \sqrt{n}} \Phi(y) e^{ (n+2) y \sqrt{t}}  dy \leq \int_{\log n}^{\infty} e^{-y^2/2 + 2 C y \sqrt{\log n}} dy
= e^{2 C^2 \log n} \int_{\log n - 2 C \sqrt{\log n}}^{\infty} e^{-x^2/2} dx.
 $$
 The last integral is at most $C' e^{-c' \log^2 n}$
by a standard bound for the Gaussian tail
 such as
 (\ref{eq_2226}) above. Consequently,
 \begin{equation}  I_3 \leq C' e^{2 C^2 \log n - c' \log^2 n}  \leq \bar{C} \leq \hat{C} \frac{e^{n^2 t / 8}}{n \sqrt{t}}, \label{eq_1152} \end{equation}
 as $c \leq x^{-1} \cdot e^{x^2/8}$ for $x > 0$. For the estimation of the integral $I_2$  we use the elementary inequality
 $1-x \geq \exp(-x - x^2)$ for $0 < x < 1/2$,
 as well as the bound $\sqrt{t} \leq 5 n^{-1} \cdot \sqrt{\log n}$. This yields
\begin{align*} I_2 & =  \int_1^{\log n} \frac{\Phi(y)}{(1 - y\sqrt{t})^{\frac{n+2}{2}}}  dy
	\leq \int_1^{\log n} \Phi(y) e^{\frac{(n+2) \sqrt{t}}{2}  y + \frac{(n+2) y^2 t}{2}} dy \\ & \leq C' \int_1^{\log n} \frac{e^{-y^2/2}}{y}  e^{\frac{n \sqrt{t}}{2}  y} dy = C' e^{n^2 t / 8} \int_1^{\log n} \frac{e^{-(y - n \sqrt{t}/2)^2/2}}{y}  dy.	
\end{align*}
Therefore,
\begin{align} \nonumber
I_2	 &\leq C' e^{n^2 t / 8}  \left[  \left| \int_{1}^{n \sqrt{t} / 4} e^{-(y - n \sqrt{t} / 2)^2/2} dy  \right|
	+ \int_{n \sqrt{t} / 4}^{\infty} \frac{e^{-(y - n \sqrt{t} / 2)^2/2}}{y} dy  \right]
	\\ & \leq \bar{C} e^{n^2 t / 8}  \left[ \PP(Z \geq n \sqrt{t}/4) + \frac{4}{n \sqrt{t}} \cdot \sqrt{2 \pi} \right] \leq \tilde{C} \frac{e^{n^2 t / 8}}{n \sqrt{t}}, \label{eq_1151}
\end{align}
where $Z$ is a standard Gaussian random variable, and we used a standard tail estimate such as (\ref{eq_2226}) which gives
$\PP(Z \geq n \sqrt{t}/4) \leq C / (n \sqrt{t})$.
To summarize, by (\ref{eq_1150}), (\ref{eq_1152_}), (\ref{eq_1152})  and (\ref{eq_1151}),
$$ \frac{I}{\kappa_n} \leq \frac{n \sqrt{t}}{2}  \left[ I_1 + I_2 + I_3 \right] \leq
C e^{n^2 t / 8}, $$
completing the proof.
\end{proof}

\begin{remark} When $T = 16 n^{-2} \cdot \log n$, most of the contribution to the integral
in Lemma \ref{lem_1102} comes  from points $x \in \RR^n$ with
\begin{equation}  \left| \, |x| \, - \, \left(1 + 4 \frac{\log n}{n}\right) \, \right| \leq C \frac{\sqrt{\log n}}{n}, \label{eq_1526} \end{equation}
as can be seen from the proof. When $L \subset \RR^n$ is a random, uniformly distributed lattice as in the next section,
there will typically be about $n^4 e^{C \sqrt{\log n}}$ lattice points satisfying (\ref{eq_1526}).
\end{remark}

\section{Random lattices}
\label{sec5}

Write $\mathscr{X}_n$ for the space of all  lattices $L \subset \RR^n$ with
$$
	\Vol_n(\RR^n / L) = \Vol_n(B^n).
	$$
We emphasize that our normalization is {\it not} that of covolume one lattices, but rather we consider lattices whose covolume is the volume of the Euclidean unit ball.
The space $\mathscr{X}_n$ is a homogeneous space under the action of the group $SL_n(\RR) = \left \{ g \in \RR^{n \times n} \, ; \, \det(g) = 1 \right \}$, where the action of $g \in SL_n(\RR)$ on the lattice $L \subset \RR^n$
is the lattice $$ g.L = \{ g(x) \, ; \, x \in L \}. $$	
Minkowski and Siegel \cite{siegel} discovered that there is a unique Haar	{\it probability} measure on $\mathscr{X}_n$ which is invariant under the action of $SL_n(\RR)$.
When we say that $L \subset \RR^n$ is a random lattice distributed uniformly in $\mathscr{X}_n$, we refer to the Haar probability measure on $\mathscr{X}_n$.
For more information on random lattices we refer the reader e.g. to
Gruber and Lekkerkerker \cite[Section 19.3]{GL} or to Marklof \cite{marklof}.
Throughout this section we set
\begin{equation}  a_0 := (1 - 1/n)^{-2}. \label{eq_1206} \end{equation}
Clearly $1 \leq a_0 \leq 1 + 10/n$, as required in order to apply Proposition \ref{cor_1945} and Lemma \ref{lem_1102}.
Recall the parameter $K_t(L) \geq 0$ that is defined in (\ref{eq_1829_}) for any lattice $L \subset \RR^n$ and any time $t > 0$.

\begin{proposition}  Let $0 < T \leq 20 n^{-2} \cdot \log n$.
Then there exists a lattice $L \in \mathscr{X}_n$ such that $a_0 |x|^2 > 1$ for any $0 \neq x \in L$ and
\begin{equation}  \int_0^T K_t(L) dt \leq \frac{C}{n^2} \cdot e^{n^2 T / 8},
\label{eq_1307} \end{equation}
where $C > 0$ is a universal constant.
\label{cor_1259}
\end{proposition}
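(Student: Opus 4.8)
The plan is to bound the expectation of $\int_0^T K_t(L)\,dt$ over a uniformly random lattice $L \in \mathscr{X}_n$ using the Siegel summation formula, and then invoke the first moment method to extract a single good lattice. The function $K_t(L)$ is a sum over non-zero lattice points $x \in L \cap B_t$ of the quantity $\Phi\bigl(t^{-1/2}(a_0 - |x|^{-2})\bigr)$, which is a non-negative function of $x$ vanishing outside the shell $R_t$ from Lemma \ref{lem_1102} (points with $|x|^2 \geq 1/(a_0 - C_0\sqrt{tn})$ lie outside $B_t$, and the integrand is only large near $|x|^2 = 1/a_0$). So $K_t(L) = \sum_{0 \neq x \in L} f_t(x)$ for a fixed, compactly supported, bounded, non-negative function $f_t$ on $\RR^n$.

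First I would record the Siegel mean value theorem in the normalization of this paper: for a reasonable function $f: \RR^n \to [0,\infty)$, since the Haar probability measure on $\mathscr{X}_n$ is $SL_n(\RR)$-invariant and lattices here have covolume $Vol_n(B^n) = \kappa_n$,
\[
\EE_{L \in \mathscr{X}_n} \sum_{0 \neq x \in L} f(x) = \frac{1}{\kappa_n} \int_{\RR^n} f(x)\,dx.
\]
Applying this with $f = f_t$ and then using Tonelli to interchange the expectation with the integral $\int_0^T \cdot\, dt$, I get
\[
\EE_{L} \int_0^T K_t(L)\,dt = \int_0^T \frac{1}{\kappa_n} \left( \int_{R_t} \Phi\!\left( \tfrac{1}{\sqrt t}\bigl(a_0 - \tfrac{1}{|x|^2}\bigr) \right) dx \right) dt \leq \int_0^T C\, e^{n^2 t/8}\,dt,
\]
where the inner bound is exactly the content of Lemma \ref{lem_1102} (whose hypotheses $0 < t \leq 20n^{-2}\log n$ and $1 \leq a_0 \leq 1 + 10/n$ are met, the latter because $a_0 = (1-1/n)^{-2}$). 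The remaining $t$-integral is elementary: $\int_0^T e^{n^2 t/8}\,dt = \tfrac{8}{n^2}(e^{n^2 T/8} - 1) \leq \tfrac{8}{n^2} e^{n^2 T/8}$, giving $\EE_L \int_0^T K_t(L)\,dt \leq \tfrac{C'}{n^2} e^{n^2 T/8}$.

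Then by the first moment method there exists at least one lattice $L \in \mathscr{X}_n$ with $\int_0^T K_t(L)\,dt \leq \tfrac{C'}{n^2} e^{n^2 T/8}$. The only extra requirement is that this $L$ also satisfy $a_0|x|^2 > 1$ for all $0 \neq x \in L$; I would handle this by noting that the set of lattices failing this condition is the set of lattices with a non-zero point in the closed ball of radius $1/\sqrt{a_0}$, whose volume is $\kappa_n / a_0^{n/2} = \kappa_n (1-1/n)^n < \kappa_n/2$, so by Siegel's formula again this exceptional set has measure at most $(1-1/n)^n < 1/2$ — alternatively, restrict attention to the event that $\int_0^T K_t(L)\,dt$ is at most twice its mean, which has probability at least $1/2$ by Markov, intersect with the complement of the small-shortest-vector event, and the intersection is non-empty; adjusting $C$ absorbs the factor. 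The main (and really the only) obstacle is getting the normalization of the Siegel formula right for covolume-$\kappa_n$ lattices and checking the mild integrability/measurability needed to apply it and to swap the order of integration — everything else is a routine integral estimate already packaged in Lemma \ref{lem_1102}.
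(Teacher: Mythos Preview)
Your approach is the same as the paper's --- Siegel mean value, Lemma~\ref{lem_1102}, integrate in $t$, then Markov combined with the short-vector event --- and it works, but there is one genuine slip. You assert that the summand $f_t(x) = 1_{B_t}(x)\,\Phi\bigl(t^{-1/2}(a_0 - |x|^{-2})\bigr)$ ``vanishes outside the shell $R_t$''. This is false on the inner ball $\{|x|^2 < 1/a_0\} \subset B_t$: there the argument of $\Phi$ is \emph{negative}, and $\Phi$ is only defined for $r \geq 0$ (and any natural extension would give a value $\geq 1/2$, not $0$). Consequently $K_t(L)$ is not even well-defined for a generic random lattice, and the Siegel integral you wrote down over $R_t$ does not compute $\EE K_t(L)$.

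The paper resolves this exactly as your argument implicitly needs: it introduces $\tilde{K}_t(L)$, the same sum but restricted to $x \in R_t$ rather than $x \in B_t$, applies Siegel and Lemma~\ref{lem_1102} to bound $\EE \int_0^T \tilde{K}_t(L)\,dt$, uses Markov to get $\int_0^T \tilde{K}_t(L)\,dt$ small with probability $\geq 1/2$, intersects with the event $\{|x| > 1 - 1/n \text{ for all } 0 \neq x \in L\}$ (probability $\geq 1 - 1/e$), and \emph{only then} observes that for such a lattice $(L \setminus \{0\}) \cap B_t = (L \setminus \{0\}) \cap R_t$, whence $K_t(L) = \tilde{K}_t(L)$. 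Once you replace $K_t$ by $\tilde{K}_t$ in your expectation computation and add this last identification, your proof is complete and matches the paper's.
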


\begin{proof} Let $L \in \mathscr{X}_n$ be a random, uniformly distributed lattice.
The Siegel summation formula~\cite{siegel} states that for any measurable function $\vphi: \RR^n \rightarrow [0, +\infty)$,
\begin{equation}  \EE \sum_{0 \neq x \in L} \vphi(x) = \frac{1}{\Vol_n(B^n)} \cdot \int_{\RR^n} \vphi. \label{eq_1008} \end{equation}
  Write $f(x) = 1$ if $|x| \leq 1 - 1/n$ and $f(x) = 0$ otherwise.
By (\ref{eq_1008}),
$$  \EE \sum_{0 \neq x \in L} f(x) = \frac{1}{\Vol_n(B^n)} \cdot \int_{\RR^n} f = (1 - 1/n)^n \leq \frac{1}{e}. $$
Hence, by the Markov inequality,
\begin{equation} \PP \left( \exists 0 \neq x \in L \, ; \, |x| \leq 1 -1/n \right) \leq \frac{1}{e}. \label{eq_1208_} \end{equation}
Let $0 < t \leq T$, and let $R = R_t \subseteq \RR^n$ be the spherical shell defined in (\ref{eq_1210}). Denote
\begin{equation}  \tilde{K}_t(L) = \sum_{0 \neq x \in L  \cap R_t} \Phi \left( \frac{1}{\sqrt{t}} \left( a_0 - \frac{1}{|x|^2}  \right) \right), \label{eq_2145} \end{equation}
i.e., the difference between $\tilde{K}_t(L)$ and $K_t(L)$ is that we sum over the spherical shell $R_t$ rather than over the ball $D_t$.
According to (\ref{eq_1008}) and Lemma \ref{lem_1102},
\begin{align*} \EE \tilde{K}_t(L) & = \EE \sum_{0 \neq x \in L  \cap R_t} \Phi \left( \frac{1}{\sqrt{t}} \left( a_0 - \frac{1}{|x|^2}  \right) \right)
\\ & = \frac{1}{\Vol_n(B^n)} \cdot \int_{R_t} \Phi \left( \frac{1}{\sqrt{t}} \left( a_0 - \frac{1}{|x|^2}  \right) \right) dx \leq C_1 e^{n^2 t /8}.
\end{align*} Since $\tilde{K}_t(L)$ is non-negative, we may apply Fubini's theorem and conclude that
$$ \EE \int_0^T  \tilde{K}_t(L) dt \leq C_1 \int_0^T e^{n^2 t /8} dt \leq \frac{8 C_1}{n^2} \cdot e^{n^2 T/8}. $$
By the Markov inequality,
\begin{equation}  \PP \left( \int_0^T  \tilde{K}_t(L) dt \geq  \frac{16 C_1}{n^2} \cdot e^{n^2 T/8} \right) \leq \frac{1}{2}.
\label{eq_1214} \end{equation}
Since $1/2 + 1/e < 1$, we conclude from (\ref{eq_1208_}) and (\ref{eq_1214}) that there exists a lattice $L \in \mathscr{X}_n$
such that $|x| > 1 - 1/n$ for all $0 \neq x \in L$ and such that
\begin{equation}  \int_0^T  \tilde{K}_t(L) dt <  \frac{16 C_1}{n^2} \cdot e^{n^2 T/8}. \label{eq_1257} \end{equation}
From (\ref{eq_1206}) we thus see that  $a_0 |x|^2  > 1$
for any $0 \neq x \in L$. Therefore the matrix $a_0 \cdot \id$ is $L$-free, and from (\ref{eq_1212}) and (\ref{eq_1210}) we see that  $$ (L \setminus \{0 \}) \cap R_t = (L \setminus \{0 \}) \cap D_t
\qquad \qquad \text{for any} \ 0 < t \leq T. $$ Consequently, from (\ref{eq_1829_}) and (\ref{eq_2145}),
$$ \tilde{K}_t(L) = K_t(L) \qquad \qquad \qquad (0 < t \leq T). $$
The desired conclusion (\ref{eq_1307}) thus follows from (\ref{eq_1257}).
\end{proof}

Let $L \subset \RR^n$ be the lattice whose existence is guaranteed by Proposition \ref{cor_1259}. Thus the matrix $a_0 \cdot \id$
is $L$-free. We may therefore apply Proposition \ref{prop_1510},
and consider the stochastic process $$ (A_t)_{t \geq 0} $$ of positive-definite, symmetric $n \times n$ matrices. Recall that almost surely,
for any $t > 0$ the ellipsoid $\cE_t = \cE_{A_t}$ is $L$-free.

\begin{lemma} Set $T = 16 n^{-2} \cdot \log n$. Then with positive probability,
$$  \det A_{T} \leq \frac{C}{n^4}, $$
for a universal constant $C > 0$.
\label{lem_1305}
\end{lemma}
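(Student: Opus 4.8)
The plan is to combine the two main estimates already developed: Proposition \ref{cor_1945}, which bounds $\EE \log \det A_T$ from above in terms of $\int_0^T \EE |\partial \cE_t \cap L|\,dt$, and Proposition \ref{cor_1832} together with Proposition \ref{cor_1259}, which control that integral. First I would verify that the value $T = 16 n^{-2} \log n$ satisfies the hypotheses required by these results: since $\log n \leq n^{1/3}/16$ for $n$ large, we have $T \leq n^{-5/3} \leq 20 n^{-5/3}$, so Proposition \ref{cor_1945} applies, and $T \leq 20 n^{-2}\log n$, so Proposition \ref{cor_1259} and Lemma \ref{lem_1102} apply; moreover $a_0 = (1-1/n)^{-2}$ satisfies $1 \leq a_0 \leq 1 + 10/n$.

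Next I would chain the inequalities. By Proposition \ref{cor_1945},
\begin{equation*}
\EE \log \det A_T \leq C - \frac{n^2 T}{4} + \frac{1}{4}\int_0^T \EE |\partial \cE_t \cap L|\,dt.
\end{equation*}
By Proposition \ref{cor_1832}, $\EE|\partial \cE_t \cap L| \leq 2 K_t(L) + C e^{-cn}$, and integrating over $[0,T]$ and using $T \leq 1$ gives $\int_0^T \EE|\partial\cE_t\cap L|\,dt \leq 2\int_0^T K_t(L)\,dt + C e^{-cn}$. By the choice of lattice $L$ from Proposition \ref{cor_1259}, $\int_0^T K_t(L)\,dt \leq (C/n^2) e^{n^2 T/8}$. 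Now plug in $T = 16 n^{-2}\log n$, so that $n^2 T / 8 = 2\log n$ and hence $e^{n^2 T/8} = n^2$; thus $\int_0^T K_t(L)\,dt \leq C$, and the integral term in the displayed bound is $O(1)$. Also $n^2 T/4 = 4\log n$. Therefore $\EE\log\det A_T \leq C' - 4\log n$, i.e. $\EE \log\det A_T \leq \log(C'' / n^4)$ for a universal constant.

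Finally I would pass from the bound on $\EE\log\det A_T$ to a positive-probability statement about $\det A_T$ itself. Since $\EE \log \det A_T \leq \log(C''/n^4)$, Markov's inequality (applied to the random variable $\log\det A_T - \log(C''/n^4)$, or equivalently just the definition of expectation) shows that with positive probability $\log\det A_T \leq \log(C''/n^4) + 1$, say, hence $\det A_T \leq e C''/n^4 = C/n^4$. One mild subtlety: $\log \det A_T$ is bounded below almost surely by $n\log c_L$ via \eqref{eq_1238}, so its expectation is well-defined and the event $\{\log\det A_T \leq \text{(its mean)} + 1\}$ has positive probability by a trivial averaging argument; no further integrability is needed. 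The only genuine ``work'' is the arithmetic of substituting $T = 16 n^{-2}\log n$ and checking the hypotheses line up — the real content was already done in Sections \ref{sec3}--\ref{sec5}, so I do not anticipate a serious obstacle here.
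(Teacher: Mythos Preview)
Your proposal is correct and follows essentially the same route as the paper: you combine Proposition~\ref{cor_1945} with Proposition~\ref{cor_1832} and Proposition~\ref{cor_1259}, substitute $T = 16 n^{-2}\log n$ to obtain $\EE \log \det A_T \leq C' - 4\log n$, and then pass from the expectation bound to the positive-probability statement by the trivial averaging argument. The paper's proof is identical in structure, differing only in that it states the final step slightly more tersely (a random variable is at most its mean with positive probability), whereas you add the remark about $\log\det A_T$ being bounded below via \eqref{eq_1238}.
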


\begin{proof} From Proposition \ref{cor_1832} and Proposition \ref{cor_1259}, for any $t > 0$,
$$  \int_0^T \EE |\partial \cE_t \cap L| dt \leq \int_0^T \left( 2 K_t(L) + C' e^{-c' n} \right) dt
\leq \frac{2C}{n^2} \cdot e^{n^2 T / 8} + C' T e^{-c' n} \leq \tilde{C}, $$
since $n^2 T / 8 = 2 \log n$. By our standing assumption that $n$ is sufficiently large, we have
$T \leq 20 \cdot n^{-5/3}$. We may therefore apply Proposition \ref{cor_1945}, and conclude that
$$ \EE \log \det A_T \leq C -  \frac{n^2 T}{4} + \frac{1}{4} \int_0^T \EE |\partial \cE_t \cap L| dt
\leq C' -  \frac{n^2 T}{4} = C' - 4 \log n. $$
In particular, with positive probability, $\log \det A_T \leq C' - 4 \log n$. The lemma follows by exponentiation.
\end{proof}

 \begin{proof}[Proof of Theorem \ref{thm1}]
 Since the matrix $A_T$ is almost surely $L$-free,
 Lemma \ref{lem_1305} guarantees the existence of an $L$-free matrix $A \in \RR^{n \times n}_+$ with $$ \det(A) \leq C / n^4. $$
According to  (\ref{eq_1128}),
\begin{equation} \Vol_n(\cE_A) = \det(A)^{-1/2} \cdot \Vol_n(B^n) \geq c_0 n^2 \cdot \Vol_n(B^n). \label{eq_1945_} \end{equation}
 The ellipsoid $\cE_A$ is $L$-free, thus $L \cap \cE_A = \{ 0 \}$.
 All that remains is to normalize. Write $\kappa_n = \Vol_n(B^n)$ and consider the matrix
  $$ S = \kappa_n^{-1/n} \cdot \det(A)^{-1/(2n)} \cdot \sqrt{A}, $$
where $\sqrt{A} \in \RR^{n \times n}_+$  is  the positive-definite square root of the matrix $A \in \RR^{n \times n}_+$.
Note that $\cE_A = (\sqrt{A})^{-1}(B^n)$. Denote $$ \tilde{L} = S(L) \subset \RR^n. $$
The lattice $\tilde{L} \subset \RR^n$ has covolume one, since $L \in \mathscr{X}_n$.
By (\ref{eq_1945_}), the set $S(\cE_A)$ is a Euclidean ball centered at the origin of volume at least $c_0 n^2$.
If $K \subseteq \RR^n$ is an open Euclidean ball centered at the origin
with $\Vol_n(K) = c_0 n^2$, then $K \subseteq S(\cE_A)$ and hence $$ \tilde{L} \cap K \subseteq S(L \cap \cE_A) = \{ 0 \}. $$
 \end{proof}

\begin{remark} \label{rem_951} Our proof of Theorem \ref{thm1} suggests a randomized algorithm for  constructing the sphere-packing lattice $L \subset \RR^n$. Indeed, begin by sampling $L \in \mathscr{X}_n$ uniformly, for example by using Ajtai's algorithm \cite{ajtai}, and then run the stochastic process described above.
It would be interesting to carry out numerical simulations of our construction, or variants thereof (e.g., replacing $\pi_t$ in (\ref{eq_1701}) by another linear transformation with the same image -- perhaps the linear map $X \mapsto \pi_t(A_t^{\alpha} X)$ for a certain  exponent $\alpha$). Such numerical simulations could help  compute  the universal constant $c$ from Theorem \ref{thm1} yielded by this construction.
\end{remark}

\appendix
\section{Appendix}

\begin{lemma} Let $L \subset \RR^n$ be a lattice and let $\cE \subseteq \RR^n$ be a non-empty, open, origin-symmetric, bounded, strictly-convex set
(e.g., an origin-symmetric ellipsoid) with $\cE \cap L = \{ 0 \}$. Then,
$$ |\partial \cE \cap L| \leq 2 \cdot (2^n-1). $$
\end{lemma}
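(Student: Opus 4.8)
The plan is to show that each coset of $2L$ in $L$ contains at most one pair $\pm x$ of boundary lattice points, which gives the bound $2 \cdot (2^n - 1)$ after excluding the trivial coset. Concretely, I would argue by contradiction: suppose $x, y \in \partial \cE \cap L$ lie in the same coset modulo $2L$ and are not related by $x = \pm y$. Then both $(x+y)/2$ and $(x-y)/2$ belong to $L$ (they are lattice vectors since $x - y \in 2L$ and $x + y = (x - y) + 2y \in 2L$), and at least one of them is nonzero — in fact both are nonzero once $x \neq \pm y$.

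The key geometric step is to exploit strict convexity. Since $\cE$ is open, origin-symmetric and strictly convex, and $x, y \in \partial \cE$ with $x \neq \pm y$, the midpoint $(x+y)/2$ of the chord joining $x$ and $y$ lies in the \emph{interior} $\cE$; similarly, writing $(x-y)/2 = \bigl(x + (-y)\bigr)/2$ as the midpoint of the chord from $x$ to $-y \in \partial \cE$ (again $x \neq -(-y) = y$), this point also lies in $\cE$. So $\cE$ contains a nonzero lattice point, contradicting $\cE \cap L = \{0\}$. Hence distinct pairs $\{\pm x\}$ of contact points lie in distinct nonzero cosets of $L/2L$, and since $|L/2L| = 2^n$ there are at most $2^n - 1$ such pairs, i.e. at most $2(2^n-1)$ contact points.

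I should be a little careful about two degenerate possibilities. First, if $x \equiv y \pmod{2L}$ but $x = -y$, this is allowed and does not produce a new interior point — that is exactly why we count \emph{pairs} $\{x, -x\}$ rather than individual points, and why the zero coset is excluded (it would correspond to the pair containing $0$, which is not on $\partial \cE$). Second, one must check that when $x \neq \pm y$ both $(x\pm y)/2$ are genuinely nonzero: $(x+y)/2 = 0$ forces $y = -x$, and $(x-y)/2 = 0$ forces $y = x$, both excluded. So the argument is clean. The main (and only mild) obstacle is making the strict-convexity step rigorous: one uses that for a bounded open strictly convex set, the open segment between two distinct boundary points lies in the interior, which is the standard characterization of strict convexity; no real difficulty arises here.
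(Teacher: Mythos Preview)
Your approach is essentially the same as the paper's: both count pairs $\{\pm x\}$ of contact points by showing that distinct pairs occupy distinct cosets of $2L$ in $L$, via the midpoint-and-strict-convexity argument. The paper uses only $(x-y)/2$, while you use both $(x\pm y)/2$; this redundancy is harmless.

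There is one small gap. Your justification for excluding the zero coset --- ``it would correspond to the pair containing $0$, which is not on $\partial \cE$'' --- is not quite right: the coset $2L$ contains infinitely many nonzero lattice points, and you have not argued that none of them lie on $\partial\cE$. What you need is that $\partial\cE \cap 2L = \emptyset$. The paper handles this (tersely) as follows: if $0 \neq x \in \partial\cE \cap 2L$, then $x/2 \in L$, and since $0$ lies in the interior of the convex set $\cE$ while $x$ lies in its closure, the midpoint $x/2$ lies in $\cE$; thus $0 \neq x/2 \in \cE \cap L$, a contradiction. With this one-line fix your proof is complete and matches the paper's.
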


\begin{proof} We follow Minkowski's classical proof that the Voronoi cell of a lattice contains
at most $2 \cdot (2^n - 1)$ facets.  Since $\cE \cap L = \{ 0 \}$, no point of $\partial \cE$ can belong to $2 L$.
Moreover, we claim that for any $x, y  \in \partial \cE \cap L$ with $x \neq y$ and $x \neq -y$, necessarily
$x - y \not \in 2 L$. Indeed, otherwise $0 \neq x-y \in 2 L$ while  $$ \frac{x-y}{2} \in \left \{ \frac{x_1 + x_2}{2} \, ; \, x_1, x_2 \in \partial \cE, x_1 \neq x_2 \right \}  \subseteq \cE. $$ Thus $(x-y) /2$ is a non-zero point belonging both to $L$ and to $\cE$, in contradiction to $\cE \cap L = \{ 0 \}$.
Consequently each coset of the subgroup of $2 L$ of the lattice $L$, either contains no points from~$\partial \cE$,
or else contains a pair of antipodal points from $\partial \cE$. There are $2^n - 1$ such cosets, excluding the subgroup $2L$ itself
which contains no points from $\partial \cE$, and the union of these cosets covers $L \setminus (2L)$. Hence the cardinality of $\partial \cE \cap L$
is at most $2 \cdot (2^n - 1)$.
\end{proof}

Let us now restate Lemma 2.1 and provide a proof.

\medskip
\noindent
\textbf{Lemma 2.1.} \textit{
Let $M_t \in \RR^{n \times n}_{\mathrm{sym}} \ (t \geq 0)$ be a family of matrices depending continuously on $t \geq 0$, such that not all of the matrices are positive-definite. 	Assume that the matrix $M_0 \in \RR^{n \times n}_{\mathrm{sym}}$ is positive-definite and $L$-free, and that for  all $t \geq 0$,
\begin{equation*}
	\partial \cE_{M_0} \cap L \subseteq \partial \cE_{M_t} \cap L. \tag{\ref{eq_1935}} \end{equation*}
Then the following hold:
\begin{enumerate}
	\item[(A)] Denote $$ \tau := \sup \left \{ \, t \geq 0 \, ; \, M_s \ \textrm{is} \ L\textrm{-free with } \partial \cE_{M_s} \cap L = \partial \cE_{M_0} \cap L \textrm{ for all } s \in [0,t] \, \right \}. $$
Then $0 < \tau < \infty$.
	\item[(B)] The symmetric matrix $M_t$ is positive-definite and $L$-free for all $0 \leq t \leq \tau$.
	\item[(C)] We gained at least one additional contact point at time $\tau$. That is,
\begin{equation*}  \partial \cE_{M_0} \cap L \subsetneq \partial \cE_{M_\tau} \cap L. \tag{\ref{eq_1608}} \end{equation*}
\end{enumerate}
}
\medskip

\begin{proof} We claim that there exist $t_0, \eps > 0$ such that
for all $0 \leq t \leq t_0$ and $0 \neq x \in L \setminus \partial \cE_{M_0}$,
\begin{equation} M_t x \cdot x > 1 + \eps.
	\label{eq_1939} \end{equation}
In order to prove this claim, we use the fact that $M_0$ is positive-definite, and hence there exists $\eps_1 > 0$ such that $M_0 \geq \eps_1 \cdot \id$.
The symmetric matrix $M_t$ depends continuously on $t$, and hence for some $t_1 > 0$ we have $M_t \geq (\eps_1/2) \id$ for all $0 \leq t \leq t_1$. Therefore (\ref{eq_1939}) holds true for all $|x| > 2/ \sqrt{\eps_1}$, provided that $\eps < 1$ and $t_0 \leq t_1$. It remains  to prove (\ref{eq_1939}) for $x \in F$ where
\begin{equation}  F = \left \{ \, 0 \neq x \in L \setminus \partial \cE_{M_0} \, ; \, |x| \leq 2 / \sqrt{\eps_1} \, \right \}. \label{eq_1629} \end{equation}
The set $F$ is finite since $L$ is discrete. The set $F$ is disjoint from the  ellipsoid $\cE_{M_0}$ since $M_0$ is $L$-free. It thus follows from (\ref{eq_1629}) that $F$ is disjoint from the closure of the ellipsoid $\cE_{M_0}$,
and hence $M_0 x \cdot x > 1$ for all $x \in F$. Since $M_t$ depends continuously on $t$ while $F$ is finite, there exists $t_0 \in (0, t_1)$ and $\eps \in (0, 1)$ such that $M_t x \cdot x > 1 + \eps$ for all $x \in F$ and $0 \leq t \leq t_0$. This completes the proof of (\ref{eq_1939}).

\medskip Let us prove (A). Fix $0 \leq t \leq t_0$. It follows from (\ref{eq_1939})
that any point $0 \neq x \in L \setminus \partial \cE_{M_0}$ does not belong to $\partial \cE_{M_t}$, since  $M_t y \cdot y = 1$ for all $y \in \partial \cE_{M_t}$.
Hence $$ \partial \cE_{M_t} \cap (L \setminus \partial \cE_{M_0}) = \emptyset, $$ where
we also used the fact that $0 \not \in \partial \cE_{M_t}$. Consequently,
\begin{equation}  \partial \cE_{M_t} \cap L \subseteq   \partial \cE_{M_0} \cap L. \label{eq_1429_} \end{equation}
It follows from (\ref{eq_1935}) that the open set $\cE_{M_t}$
contains no points from $L \cap \partial \cE_{M_0}$.
It follows from (\ref{eq_1939}) that the set $\cE_{M_t}$
does not contain non-zero points from $L \setminus \partial \cE_{M_0}$.
Therefore $\cE_{M_t}$ does not contain
 non-zero points from $$ (L \cap \partial \cE_{M_0}) \cup (L \setminus \partial \cE_{M_0})  = L. $$
 In other words, the matrix $M_t$ is $L$-free.
It now follows from (\ref{eq_1935}), (\ref{eq_1429_}) and the definition of $\tau$ that
$$ \tau \geq t_0 > 0. $$
 Since $\cE_{M_t}$ is $L$-free for $0 \leq t < \tau$, by (\ref{eq_1358}),
 \begin{equation} \sup_{0 \leq t < \tau} \Vol_n( \cE_{M_t} ) \leq C_L < \infty. \label{eq_1129}
 \end{equation}
It follows from (\ref{eq_1128}) and (\ref{eq_1129}) that the matrix $M_t$ is positive-definite for all $0 \leq t < \tau$, and
\begin{equation} \inf_{0 \leq t < \tau} \det(M_t) > 0. \label{eq_1133} \end{equation} This implies in particular that $\tau < \infty$, since we assumed that  $(M_t)_{0 \leq t < \infty}$ is {\it not} a family of positive-definite matrices. Thus (A) is proven.

\medskip We move on to the proof of (B). We have seen that the matrix $M_t$  is $L$-free for $0 \leq t < \tau$, and hence the matrix $M_{\tau}$ is $L$-free as well, by continuity.
Since $M_t$ is positive-definite for $0 \leq t < \tau$, the matrix $M_{\tau}$ is positive semi-definite, by continuity. It follows from (\ref{eq_1133})
that $\det M_{\tau} > 0$ and hence $M_{\tau}$ is in fact positive-definite.
This completes the proof of (B).

\medskip We still need to prove (C). If (\ref{eq_1608}) does not hold true, then necessarily
\begin{equation} \partial \cE_{M_0} \cap L = \partial \cE_{M_\tau} \cap L, \label{eq_1221_} \end{equation} according  to (\ref{eq_1935}). Hence, by (\ref{eq_1935})
and (\ref{eq_1221_}),
\begin{equation}  \partial \cE_{M_\tau} \cap L \subseteq \partial \cE_{M_t} \cap L \qquad \qquad \qquad \text{for all} \ t \geq \tau.
\label{eq_1630} \end{equation}
The matrix $M_{\tau}$ is positive-definite and $L$-free according to (B).
Since $M_t$ is positive-definite for $0 \leq t \leq \tau$,
we know that $(M_{t+ \tau})_{t \geq 0}$ is a family of matrices depending continuously on $t$, not all of them positive-definite.
Therefore, thanks to (\ref{eq_1630}), we may apply the lemma for the family of matrices $(M_{t+ \tau})_{t \geq 0}$, and conclude from (A) that
$$ \tau_1 := \sup \left \{ \, t \geq \tau \, ; \, M_s \ \textrm{is} \ L\textrm{-free with } \partial \cE_{M_s} \cap L = \partial \cE_{M_\tau} \cap L \textrm{ for all } s \in [\tau,t] \, \right \} $$
satisfies $\tau_1 \in (\tau, \infty)$. However, equation (\ref{eq_1221_}) and the maximality property of $\tau$ implies that $\tau_1 = \tau$, in contradiction.
 \end{proof}

\medskip
\noindent School of Mathematical Sciences, Tel Aviv University, Tel Aviv 6997801, Israel; and
Department of Mathematics, Weizmann Institute of Science, Rehovot 7610001, Israel. \\
\textit{e-mail:} \href{mailto:klartagb@tau.ac.il}{\texttt{klartagb@tau.ac.il}}


\begin{thebibliography}{99}

\bibitem{ajtai} Ajtai, M., {\it Random lattices and a conjectured 0 - 1 law about their polynomial time computable properties}. Proceedings of the 43rd Annual IEEE Symposium on Foundations of Computer Science, (2002), 733--742.

\bibitem{B} Ball, K., {\it A lower bound for the optimal density of lattice packings.}
Internat. Math. Res. Notices (IMRN), no. 10, (1992), 217--221.

\bibitem{blich} Blichfeldt, H. F., {\it The minimum value of quadratic forms, and the closest packing of spheres.}
Math. Ann., Vol. 101, no. 1, (1929), 605--608.

\bibitem{C} Campos, M., Jenssen, M., Michelen, M., Sahasrabudhe, J.,
{\it A new lower bound for sphere packing}. J. Amer. Math. Soc., Vol. 38, no. 1,
(2025), 179--224.


\bibitem{cassels}
Cassels, J. W. S., {\it
 An introduction to the geometry of numbers}.
Classics Math., Springer, 1997.

\bibitem{C2} Cohn, H., {\it A conceptual breakthrough in sphere packing}.
Notices Amer. Math. Soc., Vol. 64, no. 2, (2017), 102--115.

\bibitem{CZ} Cohn, H., Zhao, Y., {\it Sphere packing bounds via spherical codes.}
Duke Math. J., Vol. 163, no. 10, (2014), 1965--2002.

\bibitem{DR} Davenport, H., Rogers, C. A., {\it Hlawka's theorem in the geometry of numbers.}
Duke Math. J., Vol. 14, (1947), 367--375.

\bibitem{EL} Eldan, R., Lehec, J., {\it  Bounding the norm of a log-concave vector via thin-shell estimates.}
Geometric Aspects of Functional Analysis, Israel Seminar (2011--2013), Lecture Notes in Math., Vol. 2116,
Springer, (2014), 107--122.

\bibitem{GV} Gargava, N., Viazovska, M., {\it
Mean Value for Random Ideal Lattices}. Preprint, arXiv:2411.14973

\bibitem{GL}
Gruber, P. M., Lekkerkerker, C. G.,
{\it Geometry of numbers.} Second edition. North-Holland Publishing Co., 1987.

\bibitem{legall} Le Gall, J.-F., {\it Brownian motion, martingales, and stochastic calculus.}
Grad. Texts in Math., Vol. 274, Springer, 2016.

\bibitem{LM} Lovett, S., Meka, R., {\it Constructive discrepancy minimization by walking on the edges}.
SIAM J. Comput., Vol. 44, no. 5, (2015),  1573--1582.

\bibitem{KL} Kabatjanski\u{\i}, G. A.,  Leven\v{s}te\u{\i}n, V. I.,
{\it Bounds for packings on the sphere and in space.} Problemy Pereda\v{c}i Informacii 14, no. 1, (1978), 3–25.
In Russian. English translation in: Problems of Information Transmission, Vol. 14, no. 1, (1978), 1--17.

\bibitem{complex_waist} Klartag, B., {\it Eldan's stochastic localization and tubular neighborhoods of complex-analytic sets.}
J. Geom. Anal., Vol. 28, no. 3, (2018), 2008--2027.


\bibitem{KLV} Krivelevich, M., Litsyn, S., Vardy, A., {\it A lower bound on the density of sphere packings via graph theory.} Internat. Math. Res. Notices (IMRN), no. 43, (2004), 2271--2279.

\bibitem{marklof} Marklof, J., {\it Random lattices in the wild: from P\'olya's orchard to quantum oscillators.}
Lond. Math. Soc. Newsl., No. 493,  (2021), 42--49.


\bibitem{O} {\O}ksendal, B., {\it Stochastic differential equations.
An introduction with applications.} Sixth edition. Springer, 2003.

\bibitem{RY} Revuz, D., Yor, M., {\it Continuous martingales and Brownian motion.}
Third edition. Springer, 1999.

\bibitem{siegel} Siegel, C. L., {\it A mean value theorem in geometry of numbers.}
Ann. of Math., Vol. 46, (1945), 340--347.

\bibitem{rankin} Rankin, R. A., {\it On the closest packing of spheres in  $n$  dimensions.}
Ann. of Math., Vol. 48, (1947), 1062--1081.


\bibitem{rogers} Rogers, C. A., {\it Existence Theorems in the Geometry of Numbers.}
Ann. of Math., Vol. 48, (1947), 994--1002.


\bibitem{rogers3}  Rogers, C. A., {\it The number of lattice points in a set.}
Proc. London Math. Soc. (3), Vol. 6, (1956), 305--320.

\bibitem{rogers58} Rogers, C. A., {\it The packing of equal spheres.}
Proc. London Math. Soc. (3), Vol. 8, (1958), 609--620.


\bibitem{SZ} Sardari, N. T., Zargar, M., {\it New upper bounds for spherical codes and packings.}
Math. Ann., Vol. 389, no. 4,  (2024), 3653--3703.

\bibitem{Sar} Sarnak, P., {\it Some problems in number theory, analysis and mathematical physics.}
Mathematics: frontiers and perspectives, American Mathematical Society, (2000),
261--269.

\bibitem{schmidt}
Schmidt, W. M., {\it The measure of the set of admissible lattices.}
Proc. Amer. Math. Soc., Vol. 9, No. 3, (1958), 390--403.

\bibitem{schmidt1} Schmidt, W. M., {\it On the Minkowski-Hlawka theorem.}
Illinois J. Math.,  Vol. 7,  (1963), 18--23.

\bibitem{schmidt2} Schmidt, W. M., {\it Correction to my paper, ``On the Minkowski-Hlawka theorem''.}
Illinois J. Math., Vol. 7, (1963), 714.


\bibitem{vance} Vance, S., {\it Improved sphere packing lower bounds from Hurwitz lattices.}
Adv. Math., Vol. 227, no. 5, (2011), 2144--2156.

\bibitem{venkatesh} Venkatesh, A., {\it A note on sphere packings in high dimension.}
Internat. Math. Res. Notices (IMRN), no. 7, (2013), 1628--1642.


\bibitem{vershynin} Vershynin, R., {\it High-dimensional probability.}
Cambridge University Press, 2018.
\end{thebibliography}
\end{document}